\documentclass{amsart}
\usepackage{amsfonts, amssymb, amsmath, color, graphicx}
\usepackage{enumitem}
\usepackage{tikz}
\usetikzlibrary{arrows.meta,positioning}

\newtheorem{thm}{Theorem}
\newtheorem{cor}[thm]{Corollary}
\newtheorem{lem}[thm]{Lemma}

\theoremstyle{definition}
\newtheorem{defn}[thm]{Definition}
\newtheorem{rem}[thm]{Remark}

\def\Im{{\sf Im}\,}
\def\Re{{\sf Re}\,}

\numberwithin{thm}{section}

\begin{document}

\title{Dynamics inside the attracting basins of some skew products}

\author[J.E. Forn\ae ss, M. Hu, F. Rong]{John Erik Forn\ae ss, Mi Hu, Feng Rong}

\address{J.E. Forn\ae ss; Department of Mathematics, NTNU, Tronddheim, Norway}
\email{fornaess@gmail.com}

\address{M. Hu; School of Mathematics, Monash University, 9 Rainforest Walk, Clayton, VIC 3168, Australia}
\email{humihqu@gmail.com}

\address{F. Rong; School of Mathematical Sciences, Shanghai Jiao Tong University, 800 Dong Chuan Road, Shanghai, 200240, P.R. China}
\email{frong@sjtu.edu.cn}

\subjclass[2020]{32H50}

\keywords{Skew products; Fatou components; Kobayashi metrics}

\thanks{F. Rong is partially supported by the National Natural Science Foundation of China (Grant No. 12671101).}

\begin{abstract}
Polynomial skew products in $\mathbb{C}^2$ are maps of the form $F(z,w)=(P(z),Q(z,w))$, where $P$ and $Q$ are polynomials. Their local dynamics have been widely investigated. In this paper, we study the global dynamics inside Fatou components of some skew products. We consider all the inverse images in a Fatou component of a given point and use the Kobayashi metric to measure the distance between points. In the cases we consider, there are always arbitrarily large Kobayashi balls in the complement of these inverse sets.
\end{abstract}

\maketitle

\section{Introduction}

Let $\hat{\mathbb{C}}=\mathbb{C}\cup\{\infty\}$, and $f: \hat{\mathbb{C}} \rightarrow \hat{\mathbb{C}}$ be a nonconstant holomorphic map. For any $n\ge 1$, let $f^{ n}: \hat{\mathbb{C}} \rightarrow \hat{\mathbb{C}}$ be its $n$-fold iterate. In complex dynamics, two crucial disjoint invariant sets are associated with $f$, the {\sl Julia set} and the {\sl Fatou set}, which partition the space $\hat{\mathbb{C}}$ (\cite{RefM, RefF}).

The Fatou set of $f$ is defined as the largest open set where the family of iterates is locally normal. In other words, for any point $z$ in the Fatou set of $f$, there exists an open neighborhood $U$ of $z$ so that the sequence of iterates of the map restricted to $U$ forms a normal family, so the iterates are well-behaved. The complement of the Fatou set is called the Julia set. 

For any $z\in \hat{\mathbb{C}}$, the set $\{z_n\}=\{z_1=f(z_0), z_2=f^2(z_0), \cdots\}$ is called the orbit of the point $z=z_0$. The behaviour of orbits inside Fatou sets in one variable was investigated by Fornaess-Hu \cite{RefFH} and Hu \cite{RefH1, RefH2}. Hu \cite{RefH1} studied the dynamics of holomorphic polynomials on attracting basins and obtained the following result expressed with the use of the Kobayashi metric \cite{RefK}.

\begin{thm}\label{thmA}\cite{RefH1}
Suppose $f(z)$ is a polynomial of degree $N\geq 2$ on $\mathbb{C}$, $p$ is an attracting fixed point of $f(z),$ $\Omega_1$ is the immediate basin of attraction of $p$, $\{f^{-1}(p)\}\cap \Omega_1\neq\{p\}$, $\mathcal{A}(p)$ is the basin of attraction of $p$, $\Omega_i\ (i=1, 2, \cdots)$ are the connected components of $\mathcal{A}(p)$. Then, there is a constant $\tilde{C}$ so that for every point $z_0$ inside any $\Omega_i$, there exists a point $q\in \cup_k f^{-k}(p)$ inside $\Omega_i$ such that $d_{\Omega_i}(z_0, q)\leq \tilde{C}$, where $d_{\Omega_i}$ is the Kobayashi distance on $\Omega_i.$  
\end{thm}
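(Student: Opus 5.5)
The plan is to push a point $z_0\in\Omega_i$ forward into a fixed compact region of $\Omega_1$, join it there to a fixed preimage of $p$ by a path of uniformly bounded hyperbolic length, and lift that path back by $f^n$ as an unbranched covering. The constant $\tilde C$ will come from a compactness argument in $\Omega_1$ alone, independent of $i$ and of the number of iterates.

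\emph{Setup.} Every Fatou component of a polynomial in the finite basin is simply connected and hyperbolic. Each $f:\Omega_j\to\Omega_{j'}$ is a proper holomorphic map, and for $z_0\in\Omega_i$ some iterate $f^m$ maps $\Omega_i$ properly onto $\Omega_1$. Let $P=\bigcup_{j\ge1}f^j(\mathrm{Crit}(f))\cap\Omega_1$ be the postcritical set in $\Omega_1$. Since critical orbits in the basin converge to $p$, the set $\overline{P}\cap\Omega_1$ can accumulate only at $p$. Choose a small disk $D$ around $p$ with $f(\overline D)\subset D$. Let $A=\overline{D}\setminus f(D)$ be a compact fundamental annulus in $\Omega_1$ (we may assume $\partial D\cap P=\emptyset$). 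By hypothesis there is a point $q_1\in f^{-1}(p)\cap\Omega_1$ with $q_1\neq p$. If $q_1\in P$, I would replace it by another point of $\bigcup_k f^{-k}(p)\cap\Omega_1$ off $\overline P$; there are infinitely many and $P$ is countable with the said accumulation, though this needs a small argument.

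\emph{Compact region and uniform paths.} Fix a compact connected set $W\subset\Omega_1\setminus f(D)$ containing $A\setminus f(D)$ and $q_1$, for instance a thickened union of $A$ and an arc to $q_1$. Then $W\cap\overline P$ is finite. Put $\Omega'=\Omega_1\setminus(\overline P\cup\{p\})$, an open hyperbolic domain. By compactness there is a constant $C_0$ with the following property: every $w\in W\setminus\overline P$ can be joined to $q_1$ by a path in $\Omega'$ of $\Omega'$-Kobayashi length at most $C_0$. This is because $d_{\Omega'}(\cdot,q_1)$ is continuous on $W$ minus the finitely many punctures. Points of $W$ near a puncture need care, since $d_{\Omega'}$ blows up there logarithmically. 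I would handle this by first moving $z_0$ slightly, or rather by choosing the time $n$ so that $f^n(z_0)$ lies in $A$ but away from the finite set $W\cap\overline P$. Using the two consecutive annuli $A$ and $f(A)$ gives freedom in choosing $n$ and hence avoids that set.

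\emph{Lifting.} Take $z_0\in\Omega_i$, assumed for now not in the grand orbit of the critical points. Choose $n$ with $f^n(z_0)\in\Omega_1$ and $w=f^n(z_0)\in W$ at distance at least $\delta$ from the punctures. Then
\[
f^n:\ \Omega_i\setminus f^{-n}(\overline P\cup\{p\})\to\Omega'
\]
is an unbranched covering, because all critical values of $f^n$ lie in $P$. The Kobayashi metric is invariant under coverings, so the path from $w$ to $q_1$ lifts to a path from $z_0$ to a point $q\in f^{-n-1}(p)\cap\Omega_i$ of the same length $\le C_0$. That length is measured in $\Omega_i$ minus a closed set, and the monotonicity of the Kobayashi distance under inclusion then gives $d_{\Omega_i}(z_0,q)\le C_0$. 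Points in the grand critical orbit are handled by continuity of $d_{\Omega_i}$, at the cost of replacing $\tilde C$ by $C_0+1$.

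\emph{Main obstacle.} The delicate step is the uniform bound $C_0$ near the finitely many postcritical punctures in $W$, together with ensuring that $f^n(z_0)$ can always be chosen at definite distance from them. I would try first the two-annulus trick just described, which works because each critical orbit meets $A\cup f(A)$ in at most a bounded number of points.
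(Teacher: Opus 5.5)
The paper does not prove this theorem (it is quoted from Hu's earlier work), so I am judging your argument on its own. The lifting step itself is sound. Past the first entry time, $f^n$ is an unbranched covering of $\Omega'$ and preserves hyperbolic length. A target preimage of $p$ off $\overline P$ exists, because $P\cap\bigcup_k f^{-k}(p)$ is finite while $\Omega_1$ contains infinitely many iterated preimages of $p$. The gaps are in getting $f^n(z_0)$ into a region where $d_{\Omega'}(\cdot,q_1)$ is uniformly bounded.

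\emph{Not every orbit reaches $W$.} We have $f(f(D))\subset f(D)$ and $W\cap f(D)=\emptyset$. Take a component $\Omega_i\neq\Omega_1$ with first entry time $m$, and $y\in\Omega_i$ with $f^m(y)=p$. For $z_0$ near $y$, $f^n(z_0)\notin\Omega_1$ for $n<m$ and $f^n(z_0)\in f(D)$ for $n\ge m$, so no iterate ever lies in $W$. Enlarging $W$ inside $\Omega_1$ cannot help: these orbits enter $\Omega_1$ arbitrarily close to $p$, which is a puncture of $\Omega'$. Even inside $\Omega_1$ an orbit need not meet $A$, since the first entry into $D$ can land directly in $f(D)$. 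The right landing point is the last one before the first entry into $D$. It lies in the compact set $f^{-1}(\overline D)\setminus D$, either inside $\Omega_1$ or inside one of the finitely many components $\Omega_j\neq\Omega_1$ with $f(\Omega_j)=\Omega_1$. So you also need base regions and target preimages of $p$ in those $\Omega_j$.

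\emph{The two-annulus trick does not avoid the punctures.} $P$ is forward invariant, so if $f^n(z_0)$ is $\varepsilon$-close to $x\in P$, then $f^{n+1}(z_0)$ is $O(\varepsilon)$-close to $f(x)\in P$. Now take $z_0$ near an iterated preimage of a critical point, in any $\Omega_i$ and arbitrarily close to its boundary. Every iterate of $z_0$ that lands in $W$ is near a puncture, where $d_{\Omega'}$ blows up. Your closing continuity remark presupposes exactly the uniformity that is missing here. No argument through an unbranched covering can supply it; you must allow bounded branching. Cover the landing sets and connecting arcs by finitely many small disks $B$ at positive distance from $p$, and pull them back along the orbit. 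Each critical orbit meets $\overline B$ a bounded number of times and passes through a strictly preperiodic component at most once. Hence the pulled-back component maps onto $B$ with uniformly bounded degree. A bounded-degree Koebe/modulus lemma then bounds its hyperbolic diameter in $\Omega_i$, and chaining finitely many such disks gives $\tilde C$.
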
 

Theorem \ref{thmA} shows that in an attracting basin of a complex polynomial, the backward orbit of the attracting fixed point either is the point itself or accumulates at the boundary of all the components of the basin in such a way that all points of the basin lie within a uniformly bounded distance of the backward orbit, measured with respect to the Kobayashi metric. This is an interesting and innovative result.

On the other hand, Hu \cite{RefH2} proved that Theorem \ref{thmA} is no longer valid for parabolic basins of polynomials in one dimension. This is a more interesting and surprising result. In addition, Fornaess-Hu \cite{RefFH} investigated rational functions and a special case of a transcendental function in one dimension, which includes polynomials considered as maps on $\hat{\mathbb{C}}$. They proved that Theorem \ref{thmA} still holds for rational functions. However, Theorem \ref{thmA} does not hold for all transcendental functions, as demonstrated by counterexamples in the paper.

Hu \cite{RefH3} studied the same question in higher dimensions. The following is a generalization of Theorem \ref{thmA} to dimension two.

\begin{thm}{\label{the2}}\cite{RefH3}
Suppose $F(z, w)=(P(z), Q(w)),$ where $P(z), Q(w)$ are two polynomials of degree $m_1, m_2\geq2$ on $\mathbb{C},$ $P(0)=Q(0)=0,$ and $0< |P'(0)|, |Q'(0)|<1.$  Let $\Omega$ be the immediate attracting basin of $F(z, w)$. Then, there is a constant $C$ such that for every point $(z_0, w_0)\in \Omega$, there exists a point $(\tilde{z}, \tilde{w})\in \cup_k F^{-k}(0, 0),  k\geq0$ so that $d_\Omega\big((z_0, w_0), (\tilde{z}, \tilde{w})\big)\leq C$, where $d_\Omega$ is the Kobayashi distance on $\Omega$. 
\end{thm}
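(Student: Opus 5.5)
Since $F(z,w)=(P(z),Q(w))$ is a product map, the immediate basin splits as $\Omega=\Omega_P\times\Omega_Q$, where $\Omega_P$ and $\Omega_Q$ are the immediate basins of $0$ for $P$ and $Q$. Indeed, $F^n(z,w)=(P^n(z),Q^n(w))$ converges to $(0,0)$ exactly when both coordinates do. The connected component of this set containing the origin is the product of the corresponding one-dimensional components, because a product of connected sets is connected and the projections of a connected set are connected. I will reduce everything to one variable using two facts:
\begin{itemize}
\item the Kobayashi distance of a product is the maximum of the factor distances, $d_\Omega((z,w),(z',w'))=\max\{d_{\Omega_P}(z,z'),d_{\Omega_Q}(w,w')\}$;
\item Theorem \ref{thmA}, applied to the immediate basin $\Omega_1$.
\end{itemize}

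The main point is to deal with the iterate index. Theorem \ref{thmA} for $P$ gives, for each $z_0\in\Omega_P$, a point $\tilde z\in\Omega_P$ with $P^{k}(\tilde z)=0$ for some $k$ and $d_{\Omega_P}(z_0,\tilde z)\le \tilde C_P$. Similarly, for each $w_0\in\Omega_Q$ there is $\tilde w\in\Omega_Q$ with $Q^{l}(\tilde w)=0$ and $d_{\Omega_Q}(w_0,\tilde w)\le\tilde C_Q$. Since $P(0)=0$, the condition $P^{k}(\tilde z)=0$ implies $P^{j}(\tilde z)=0$ for all $j\ge k$, and likewise for $Q$. With $n=\max(k,l)$ we get $F^{n}(\tilde z,\tilde w)=(0,0)$, so $(\tilde z,\tilde w)\in\bigcup_k F^{-k}(0,0)$, and it lies in $\Omega$. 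The product formula then gives
$$d_\Omega\big((z_0,w_0),(\tilde z,\tilde w)\big)=\max\{d_{\Omega_P}(z_0,\tilde z),d_{\Omega_Q}(w_0,\tilde w)\}\le \max\{\tilde C_P,\tilde C_Q\}=:C.$$

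The hypothesis $\{f^{-1}(p)\}\cap\Omega_1\neq\{p\}$ of Theorem \ref{thmA} has to be checked or avoided. In the degenerate case $P^{-1}(0)\cap\Omega_P=\{0\}$, the map $P$ restricted to $\Omega_P$ is a proper map of degree $\ge 2$: the immediate basin contains a critical point, and $P$ cannot be injective on $\Omega_P$ since $\Omega_P$ is a non-simply-connected or degree-$\ge2$ proper image. Then $0$ has multiplicity $\ge 2$ as a preimage, so $P'(0)=0$, which contradicts $0<|P'(0)|$. More carefully, $P|_{\Omega_P}\colon\Omega_P\to\Omega_P$ is proper of some degree $d$. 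If $d=1$, then $P$ is an automorphism of the hyperbolic domain $\Omega_P$ with an attracting fixed point, contradicting Schwarz's lemma. So $d\ge2$, and since $0$ is a simple zero of $P$ (because $P'(0)\ne0$), $0$ has another preimage in $\Omega_P$. This verifies the hypothesis, and it is exactly where $0<|P'(0)|$ is used.

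The step I expect to be the main obstacle is justifying the product property of the Kobayashi distance. The inequality $\ge$ follows from the distance-decreasing property of the projections. For $\le$, a chain of discs in the first factor can be paired with a chain in the second factor by a reparametrization trick, giving the standard result for the Kobayashi pseudodistance on products (Kobayashi's book). I would cite this result rather than reprove it.
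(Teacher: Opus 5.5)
The paper does not prove this statement. It is quoted from \cite{RefH3} as background for the negative results, Theorems~\ref{T:C1}--\ref{T:C4}, so there is no in-paper argument to compare against. Judged on its own, your proof is correct.
\begin{itemize}
\item The identification $\Omega=\Omega_P\times\Omega_Q$ is right.
\item You handle the iterate index correctly by taking $n=\max(k,l)$ and using that $0$ is fixed by both $P$ and $Q$.
\item Your ``more carefully'' verification of the hypothesis of Theorem~\ref{thmA} is sound. $P|_{\Omega_P}$ is proper onto $\Omega_P$. Degree $1$ is excluded because an automorphism of a hyperbolic domain has derivative of modulus $1$ at a fixed point. Since $0$ is a simple zero of $P$, a degree $\ge 2$ forces a second preimage of $0$ in $\Omega_P$.
\end{itemize}
The first, muddled justification in that paragraph (about non-simply-connected images) should be deleted. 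Alternatively, replace it with Fatou's theorem that $\Omega_P$ contains a critical point, which also forces degree $\ge 2$.

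The step you flag as the main obstacle is not needed. The theorem only asks for some constant and only needs an upper bound on $d_\Omega$. That bound follows from two facts. First, the slice maps $z\mapsto (z,w_0)$ from $\Omega_P$ into $\Omega$ and $w\mapsto(\tilde z,w)$ from $\Omega_Q$ into $\Omega$ are holomorphic, so the distance-decreasing property applies to each. Second, combine them with the triangle inequality:
\[ d_\Omega\big((z_0,w_0),(\tilde z,\tilde w)\big)\le d_{\Omega_P}(z_0,\tilde z)+d_{\Omega_Q}(w_0,\tilde w)\le \tilde C_P+\tilde C_Q. \]
This avoids the chain-reparametrization argument behind the product formula $d_{\Omega}=\max\{d_{\Omega_P},d_{\Omega_Q}\}$. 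That formula is true and citable from Kobayashi's book, but it only improves the constant to $\max\{\tilde C_P,\tilde C_Q\}$. The lower bound via projections, which you mention, plays no role in this theorem. It is instead the tool the present paper uses for its counterexamples.
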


However, Theorem \ref{the2} is not valid for any of the following cases:
\begin{itemize} [itemsep=0pt]
\item[(1)] $P(z)=z^{m_1},  Q(w)=w^{m_2}$;
\item[(2)] $P(z)=z^m, 0<|Q'(0)|<1, $ i.e., $P'(0)=0;$
\item[(3)] $P(z)=z^m, Q'(0)=1, $  i.e., $P'(0)=0;$
\item[(4)] $0<|P'(0)|<1, Q'(0)=1;$
\item[(5)] $P'(0)=Q'(0)=1$. 
\end{itemize}

Holomorphic dynamics of polynomial skew products was firstly studied by Heinemann \cite{RefHe} and then continued by Jonsson \cite{RefJM}, Roeder\cite{RefR}. Lilov \cite{RefL}, Raissy\cite{RefJR} and Peters and Raissy \cite{RefPR} studied the dynamics of holomorphic skew products near an invariant fiber. The dynamics of skew products have been useful test cases for complex dynamics in dimension two, Boc Thaler \cite{RefB}. And Astorg et al. \cite{RefAB}\cite{RefABDPR} proved the existence of wondering domains for skew products in $\mathbb{P}^2.$

Suppose that $F$ is a polynomial skew product, i.e., $F(z, w)=(P(z), Q(z, w)),$ where $P(z), Q(z, w)$ are two polynomials of degree $m_1, m_2\geq2$ on $\mathbb{C}$ and $P(0)=0, Q(0,0)=0$. 
Hu \cite{RefH3} showed that Theorem \ref{the2} also fails in the following cases:
\begin{itemize} [itemsep=0pt]
\item[(1)] $P(z)=z^2, Q(z, w)=w^2+az, a\in\mathbb{C};$ 
\item[(2)] $P(z)=az+z^2, Q(z, w)=w^2+cw+bz, 0<|a|, |b|, |c|<<1$ and $ |a|>>|c|, |a|>>|b|, |c|>>|ab|$.
\end{itemize}

In this paper, we study more general polynomial skew products.  Fornaess and Hu \cite{RefFH} \cite{RefH3} studied attracting basins with opposite conclusions that there does or does not exist a constant $C$, in this paper, we explore semi-parabolic maps and find non existence of constant $C$ in all our cases. Overall, we prove that Theorem \ref{the2} does not hold for all these families of skew products (see Theorems \ref{T:C1}, \ref{T:C2}, \ref{T:C3} and \ref{T:C4}). 

Let $\Omega$ be the attracting basin of $F(z, w)=(P(z), Q(z, w))$ at $(0,0)$, and $U$ be the one-dimensional basin of $P(z)$ at $z=0$.

In section 2, we study the first family $F(z,w)=(z-z^2, \lambda w-2zw+aw^2)$, $0<\lambda\le 1$. We first show that $\Omega$ has the fibered structure $\Omega=\bigcup_{z\in U} V_z$, where $V_z$ is an open set (in $\mathbb{C}$) containing the origin $w=0$. Secondly, we project any connected component of $\Omega$ to $U\times \{w=0\}$. Then, we can reduce everything in dimension two to the one-dimensional parabolic basin $U$ on the $z$-coordinate, and prove Theorem \ref{T:C1} by the results in \cite{RefH2}. 

In sections 3-5, all $P(z)$ have attracting basins instead of parabolic basins, thus we cannot prove Theorems \ref{T:C2}, \ref{T:C3} and \ref{T:C4} by projecting to the $z$-coordinate since the Kobayashi distance is bounded in attracting basins by Theorem \ref{thmA} above. Hence, we need to explore more geometric properties and the dynamical behavior of $Q(z, w)$ in the $w$-direction.

In section 3, we study the second family $F(z,w)=(\lambda z-z^2, w-2zw-w^2)$, $0<\lambda<1$. We first show that $U\subset\partial \Omega$. Thus, we can project the immediate basin $\Omega^0$ to the $w$-coordinate, i.e., $\{z=0\}\times\mathbb{D}^*_R$, where $\mathbb{D}^*_R$ is a punctured disk with radius $R$. Then, we can prove Theorem \ref{T:C2}. To prove $U\subset\partial \Omega$, we need Lemma \ref{L:w=0} in which we show that for any $z\in U$ and $\epsilon>0$, there exist points $w', w''$ with $0<|w'|, |w''|<\epsilon$ such that the orbit of $(z,w')$ goes to $\infty$ while $(z,w'')$ is in $\Omega$ (See the orbit behavior near the origin in Figure \ref{fig1}).

In section 4, we study the third family $F(z,w)=(\lambda z-z^2, w-w^2-z^2)$, $0<\lambda<1$. We first change coordinates and find a convergent stable curve $w=\Psi(z)$. Then, we change the coordinate again so that the stable curve $w=\Psi(z)$ becomes $\tilde{w}=w-\Psi(z)=0$ which is contained in $\partial \Omega.$ Hence, we reduce the third family to the second family (see Lemma \ref{L:tilde}).

In section 5, we study the fourth family $F(z,w)=(\lambda z-z^2, w-w^2+azw^3)$, $0<\lambda<1$. The main difficulty of this case is that some Fatou components can be unbounded.

\section{The first family}

In this section, we consider the following family of skew products:
$$F(z,w)=(P(z),Q(z,w))=(z-z^2, \lambda w-2zw+aw^2),\ \ \ \ \ \ 0<\lambda\le 1.$$
Denote by $\Omega$ the attracting basin of $F$ at $(0,0)$.

To prove Theorem \ref{T:C1}, the main point is to project $F$ to the first variable and take advantage of the fact that the Kobayashi metric is distance decreasing. Then, we can use the fact that in the first variable the basin is parabolic.

\begin{lem}\label{L:projection}
$\Omega$ has the fibered structure $\Omega=\bigcup_{z\in U} V_z$, where $U$ denotes the one-dimensional parabolic basin of $P(z)$ at $z=0$ and each $V_z$ is an open set (in $\mathbb{C}$) containing the origin $w=0$.
\end{lem}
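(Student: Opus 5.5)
Set $V_z=\{w\in\mathbb{C}: F^n(z,w)\to(0,0)\}$, so that $\Omega=\bigcup_z \{z\}\times V_z$ by definition. Two things must be shown: $V_z\neq\emptyset$ exactly when $z\in U$, and in that case $V_z$ is open and contains $0$. If $(z,w)\in\Omega$ then its first coordinate satisfies $P^n(z)\to 0$, so $z\in U$. Conversely, the line $\{w=0\}$ is invariant, since $Q(z,0)=0$. Hence for $z\in U$ we have $F^n(z,0)=(P^n(z),0)\to(0,0)$, and $0\in V_z$. Openness of $V_z$ in $\mathbb{C}$ will follow once $\Omega$ is shown to be open in $\mathbb{C}^2$, because $V_z$ is then a slice of an open set.

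The main work is openness of $\Omega$, because the fixed point is not attracting: $P'(0)=1$, and when $\lambda=1$ both eigenvalues equal $1$. I would build a forward-invariant open set $W$, namely a small neighbourhood of $(0,0)$ intersected with an attracting petal, and prove $W\subset\Omega$. Then $\Omega=\bigcup_n F^{-n}(W)$ is open, since every orbit in $\Omega$ eventually enters any such neighbourhood set-up.

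To construct $W$, let $z_n=P^n(z)$ and use the standard parabolic asymptotics $z_n\sim 1/n$ inside the attracting petal $\{|z-r|<r\}$, with the remaining points of $U$ entering the petal after finitely many steps. Write
\[
w_{n+1}=w_n(\lambda-2z_n+aw_n).
\]
While $|w_n|$ is small, $|w_{n+1}|\le |w_n|\,|\lambda-2z_n+a w_n|$.

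If $\lambda<1$, choose $|w|<\delta$ and $|z|<\delta$ with $\delta$ small enough that $|\lambda-2z_n+aw_n|\le (1+\lambda)/2<1$. Then $w_n\to 0$ geometrically, and the set $W=\{z \text{ in the petal}, |z|<\delta, |w|<\delta\}$ is forward invariant.

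If $\lambda=1$, the multiplier is $1-2z_n+aw_n$, and $\prod_n|1-2z_n|\approx\prod_n(1-2\,\Re z_n)\sim C n^{-2}\to0$. The danger is that the term $aw_n$ spoils this decay. I would handle it by a bootstrap: consider $W=\{z\in \text{petal}, |z|<\delta, |w|<\epsilon|z|\}$ with $\epsilon$ small. On $W$ we have $|aw_n|\le|a|\epsilon|z_n|$. Because $z_n$ lies in a sector around the positive real axis, $\Re z_n\ge c|z_n|$, so
\[
|1-2z_n+aw_n|\le 1-(2c-|a|\epsilon)|z_n|+O(|z_n|^2).
\]
Meanwhile $|z_{n+1}|=|z_n|\,|1-z_n|\ge |z_n|(1-|z_n|)$. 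Since $2c-|a|\epsilon>1$ fails in general, I would first shrink the petal to a thinner sector where $c$ is close to $1$. This gives $|w_{n+1}|/|z_{n+1}|\le |w_n|/|z_n|\,(1-(2c-1-|a|\epsilon)|z_n|+\dots)\le |w_n|/|z_n|$. So $W$ is forward invariant, and on it $w_n\to0$ because $|w_n|<\epsilon|z_n|\to0$. Since $\Omega$ is a basin, every orbit in $\Omega$ eventually lands in $W$, so $\Omega=\bigcup F^{-n}(W)$ is open.

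Getting this invariant region right in the case $\lambda=1$ is the step I expect to be the main obstacle, in particular the sector-thinning estimate using $\Re z_n\approx|z_n|$. One also has to ensure that every orbit converging to $(0,0)$ eventually enters $W$. If the ratio $|w_n|/|z_n|$ does not become small, I would instead use a weaker invariant region, $|w|<\epsilon|z|^{1/2}$, and exploit the $n^{-2}$ decay of the product more carefully.
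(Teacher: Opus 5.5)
\textbf{Gap: the openness step.} You argue that every convergent orbit eventually lands in $W$, so $\Omega=\bigcup_n F^{-n}(W)$ is open. With your definition $V_z=\{w: F^n(z,w)\to(0,0)\}$, the set $\Omega$ is simply not open, so no argument can give this.

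\emph{The origin.} $(0,0)$ lies in your $\Omega$ (and $0\notin U$), while $(z,0)$ with small real $z<0$ escapes.

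\emph{An invariant line for $\lambda=1$, $a\neq 0$.} Put $\omega=w-z/a$. A direct computation gives $Q(z,w)-P(z)/a=\omega+a\omega^2$. So $F$ is linearly conjugate to the product map $(z,\omega)\mapsto(z-z^2,\ \omega+a\omega^2)$, and the line $w=z/a$ is invariant.
\begin{itemize}
\item Its points over $U$ converge to the origin with $w_n/z_n\equiv 1/a$, so they never enter $W$ once $\epsilon<1/|a|$.
\item $\omega=0$ lies in the Julia set of $\omega\mapsto\omega+a\omega^2$, so points whose orbits escape to $\infty$ accumulate on this line.
\end{itemize}
Hence $z/a$ belongs to your $V_z$ but is not an interior point of it.

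\emph{The fallback.} The region $|w|<\epsilon|z|^{1/2}$ fails for the same reason. For small $|z|$ it contains the points $(z,z/a+\omega)$ with $|\omega|$ tiny, and some of these escape.

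\textbf{How to repair it.} The lemma has to be read, as the paper reads it, with $\Omega$ the open basin: the interior of the set of convergent points, i.e.\ the union of the Fatou components on which $F^n\to(0,0)$. Then:
\begin{itemize}
\item openness of $V_z$ is automatic, since it is a slice of an open set;
\item $z\in U$ holds because a whole neighbourhood of $z$ converges under $P$;
\item the real content is $0\in V_z$, i.e.\ that $(z,0)$ has a \emph{neighbourhood} of convergent points.
\end{itemize}
Invariance of $\{w=0\}$ gives only pointwise convergence of $(z,0)$. The line $w=z/a$ shows that pointwise convergence does not put a point in the open basin.

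Use your $W$ for this last point instead of for openness. Your $W$ is sound: it is open, forward invariant, and consists of convergent points. Your $\lambda=1$ estimate is correct and is essentially the paper's blow-up $w=zt$, with $|t_n|$ decreasing for $z$ in a sector. For $z\in U$ we have $(P^n(z),0)\in W$ for large $n$, so $(z,0)\in F^{-n}(W)\subset\Omega$. This pull-back by $F^{-n}$ is exactly how the paper concludes $U\times\{0\}\subset\Omega$. For the open basin the equality $\Omega=\bigcup_n F^{-n}(W)$ does in fact hold, but it needs extra justification, and you do not need it.
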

\begin{proof}
For any $(z_0,w_0)\in\Omega$ and $n\ge 1$, set $(z_n,w_n)=F^n(z_0,w_0)$.

First, consider the case when $a=0$. Set $\mu:=2/\lambda$. Then,
$$\begin{aligned}
z_n&=\prod_{j=0}^{n-1} (1-z_j)z_0,\\
w_n&=\lambda^n\prod_{j=0}^{n-1} (1-\mu z_j)w_0.
\end{aligned}$$
It is well-known that $z_n\sim 1/n$ for $n\gg 1$. Then, it is easy to estimate that $|w_n|\sim \lambda^n$ if $0<\lambda<1$, and $|w_n|\sim 1/n^2$ if $\lambda=1$. Hence, $\Omega$ has the fibered structure with each $V_z=\mathbb{C}$.

Next, consider the case when $a\neq 0$. Again, we have the estimates $z_n\sim 1/n$ for $n\gg 1$, and $|w_n|\sim \lambda^n$ if $0<\lambda<1$. When $\lambda=1$, under the blow-up $(z,w)=(z,zt)$, the blow-up map $\tilde{F}(z,t)$ takes the form
$$\begin{aligned}
z_1 & =z-z^2=z(1-z),\\
t_1 & =t(1-z+O(2)).
\end{aligned}$$
Then, we have the estimates $|t_n|\sim 1/n$ and $|w_n|\sim 1/n^2$. This shows that $\Omega_{\epsilon,\delta}:=V_{\epsilon,\delta}\times \{w=0\}\subset \Omega$ for small $\epsilon,\delta>0$, where
$$V_{\epsilon,\delta}=\{z\in \mathbb{C}:\ 0<|z|<\epsilon,\ |\arg z|<\delta\}.$$
Since $F^{-n}(\Omega_{\epsilon,\delta})\subset \Omega$ for all $n\ge 1$ and $\{w=0\}$ is invariant under $F$, we see that $U\times \{w=0\}\subset \Omega$.
\end{proof}

\begin{thm}\label{T:C1}
Let $F$ and $\Omega$ be as above. Denote by $S$ the grand orbit of $(1/2,0)$ under $F$. Then, for any connected component $\Omega_\alpha$ of $\Omega$ and an arbitrary constant $C>0$, there exists a point $(z_0, w_\alpha)\in \Omega_\alpha$ such that for any $(\tilde{z}, \tilde{w})\in S\cap \Omega_\alpha$, the Kobayashi distance $d_{\Omega_\alpha}((z_0, w_\alpha), (\tilde{z}, \tilde{w}))\ge C$.
\end{thm}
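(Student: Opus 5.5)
The plan is to project to the first coordinate and reduce to the one-variable parabolic result of \cite{RefH2}. Let $\pi(z,w)=z$. By Lemma \ref{L:projection}, $\pi(\Omega)\subset U$ and $U\times\{0\}\subset\Omega$. For a connected component $\Omega_\alpha$, the image $\pi(\Omega_\alpha)$ is connected and lies in $U$, so it is contained in one connected component $U_\alpha$ of $U$. Because the Kobayashi distance does not increase under holomorphic maps,
$$d_{U_\alpha}(\pi(p),\pi(q))\le d_{\Omega_\alpha}(p,q)\quad\text{for all } p,q\in\Omega_\alpha .$$
So it is enough to find a point $z_0\in\pi(\Omega_\alpha)$ whose $d_{U_\alpha}$-distance from every point of $\pi(S\cap\Omega_\alpha)$ is at least $C$.

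Next I locate $\pi(S)$. Since $\{w=0\}$ is invariant and $F(z,0)=(P(z),0)$, points of $S$ lying on $\{w=0\}$ project to the grand orbit of $1/2$ under $P(z)=z-z^2$. Off the axis I only need the first coordinate. If $F^m(\tilde z,\tilde w)=F^k(1/2,0)$, then $P^m(\tilde z)=P^k(1/2)$. Hence $\pi(S)$ is contained in the grand orbit $G$ of $1/2$ under $P$. Here $1/2$ is the critical point of $P$, and $P(1/2)=1/4$ lies in the parabolic basin. So every point of $\pi(S\cap\Omega_\alpha)$ lies in $G\cap U_\alpha$.

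The key step is to apply the result of \cite{RefH2}: for a parabolic basin of a polynomial, the analogue of Theorem \ref{thmA} fails. For $P(z)=z-z^2$ and every component $U_\alpha$ of the basin, that gives, for each $C$, a point $z_0\in U_\alpha$ with $d_{U_\alpha}(z_0,G\cap U_\alpha)\ge C$. This is the step I expect to be the main obstacle. I need to check that the statement in \cite{RefH2} is phrased for the grand orbit of the critical point (or of an arbitrary point, e.g.\ its backward orbit together with the forward orbit) rather than only the backward orbit of a single point. If it covers only backward orbits, I would fall back on its mechanism. The forward orbit $P^k(1/2)$ tends to $0$ tangentially to the repelling direction inside the petal. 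The points far out in the attracting petal, in Fatou coordinates with large imaginary part, stay Kobayashi-far from both the forward orbit and all preimages. I would push this uniformly to all components using the fact that $P^n:U_\alpha\to U$ is a proper map that preserves the relevant distances up to the contraction inequality.

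Finally I need $z_0\in\pi(\Omega_\alpha)$, so that there is some $w_\alpha$ with $(z_0,w_\alpha)\in\Omega_\alpha$. If $\Omega_\alpha$ meets $\{w=0\}$, then $\Omega_\alpha\supset U_\alpha\times\{0\}$ and I take $w_\alpha=0$. In general, each fiber $V_z$ is open and contains $0$, so $\Omega_\alpha$ is the component containing $U_\alpha\times\{0\}$ for the matching $U_\alpha$. This needs the slight strengthening that $\pi(\Omega_\alpha)=U_\alpha$. I would argue this from the fibered structure and connectivity in the case $a=0$, where $V_z=\mathbb C$, and for $a\neq0$ by using that the component is determined by its trace on the $z$-axis. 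Combining this with the contraction inequality above gives $d_{\Omega_\alpha}((z_0,w_\alpha),(\tilde z,\tilde w))\ge d_{U_\alpha}(z_0,\tilde z)\ge C$.
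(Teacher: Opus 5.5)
Your overall route is the paper's. You project by $\pi(z,w)=z$, use that $\pi$ does not increase Kobayashi distance, note that $\pi(S)$ lies in the grand orbit $S_P$ of $1/2$ under $P$, and apply \cite[Theorem A]{RefH2} in the $z$-variable. The paper quotes that theorem directly in the grand-orbit form, so your fallback is unnecessary. In that fallback, note that the orbit of $1/2$ tends to $0$ along the positive real axis, which is the attracting direction of $z-z^2$, not the repelling one.

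The step that does not work is the last one: producing $w_\alpha$ with $(z_0,w_\alpha)\in\Omega_\alpha$. That each $V_z$ is open and contains $0$ does not imply that every component $\Omega_\alpha$ meets $\{w=0\}$. Lemma \ref{L:projection} says nothing about connectedness of $V_z$ when $a\neq 0$, and a component of $\Omega$ could be made of fiber pieces that avoid $0$. Since $U$ is connected (the Julia set of $z-z^2$ is a Jordan curve), $U\times\{0\}$ lies in a single component $\Omega^0$. Your sentence would therefore amount to proving that $\Omega$ is connected. Likewise, a component missing $\{w=0\}$ has no trace on the $z$-axis by which it could be ``determined''. The paper itself merely asserts that $\pi$ maps every $\Omega_\alpha$ onto $U\times\{0\}$, and Lemma \ref{L:projection} gives this only for $\Omega^0$.

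The fix is the pull-back used in the proof of Theorem \ref{T:C2}.
\begin{itemize}
\item If $a=0$, then $\Omega=U\times\mathbb{C}$ and there is nothing to prove.
\item If $a\neq 0$, each $\Omega_\alpha$ satisfies $F^n(\Omega_\alpha)\subset\Omega^0$ for some $n$. This holds because tails of orbits in $\Omega$ lie in a connected part of $\Omega$ near the origin that meets $\{w=0\}$.
\item Moreover $F$ is proper on $\mathbb{C}^2$ and $F^{-1}(\Omega)=\Omega$. Hence $\Omega_\alpha$ is a component of $F^{-n}(\Omega^0)$, and $F^n:\Omega_\alpha\to\Omega^0$ is proper, hence onto.
\end{itemize}
Now choose $z_0\in U$ with $d_U(z_0,S_P)\ge C$, and pick $p_\alpha\in\Omega_\alpha$ with $F^n(p_\alpha)=(z_0,0)$. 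Using $F^n(S\cap\Omega_\alpha)\subset S\cap\Omega^0$ and $\pi(S)\subset S_P$, you get
\[
d_{\Omega_\alpha}(p_\alpha,q)\ge d_{\Omega^0}\bigl((z_0,0),F^n(q)\bigr)\ge d_U\bigl(z_0,\pi(F^n(q))\bigr)\ge C\quad\text{for all } q\in S\cap\Omega_\alpha .
\]
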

\begin{proof}
By Lemma \ref{L:projection}, we have a surjective projection $\pi:\Omega_\alpha\rightarrow U\times \{w=0\}$.

Denote by $S_P$ the grand orbit of $z=1/2$ under $P$. By \cite[Theorem A]{RefH2}, for an arbitrary constant $C>0$, there exists a point $z_0\in U$ such that for any $\tilde{z}\in S_P$, the Kobayashi distance $d_{U}(z_0,\tilde{z})\ge C$.
	 
Take any point $(z_0,w_\alpha)\in \Omega_\alpha$. Since the Kobayashi distance is non-increasing under the projection $\pi$, for any $(\tilde{z}, \tilde{w})\in S\cap \Omega_\alpha$, we get $d_{\Omega_\alpha}((z_0,w_\alpha),(\tilde{z},\tilde{w}))\ge d_{U}(z_0,\tilde{z})\ge C$.
\end{proof}

\begin{rem}
A similar argument works for $P(z)=z-z^{\nu+1}+O(z^{\nu+2})$, $\nu\ge 1$, and $z=1/2$ replaced by any point in $U$.
\end{rem}
   
\section{The second family}\label{S:second}

In this section, we consider the following family of skew products:
$$F(z,w)=(P(z),Q(z,w))=(\lambda z-z^2,w-2zw-w^2),\ \ \ \ \ \ 0<\lambda<1.$$
For simplicity, suppose that $\lambda=1/2$.

The difference between the first and second families is that, in the first family, the basin is parabolic in the first variable, whereas in the second family, the projection to the first variable is into an attractive basin where we know that there is no big gaps in the preimages. So to conclude that in this case there are big gaps, we must understand more deeply the Kobayashi metric on two dimensional domains.

Denote by $\Omega$ the attracting basin of $F$ at $(0,0)$. Let $U$ be the one-dimensional attracting basin of $z=0$ for $P(z)$ in the $z$-axis. Then, we have\\
1. $\{|z|<1/2\}\subset U$: $|z/2-z^2|\le |z|(|z|+1/2)<|z|$.\\
2. $U\subset \{|z|<3/2\}$: If $|z|\geq 3/2$ then $|z/2-z^2|\geq |z|(|z|-1/2)\geq |z|$.\\
3. $|w|<5$ for $(z,w)\in \Omega$: If $|w|\geq 5$ then $|w-2zw-w^2|=|w||1-2z-w|\ge |w|(5-1-2|z|)|\geq |w|(5-1-3)=|w|$.

Denote by $V$ the one-dimensional parabolic basin of $Q_0(w):=w-w^2$ at $w=0$ in the $w$-axis. For any $z\in U$, set $Q_z(w):=Q(z,w)$.

\begin{lem}
$V\subset \Omega$.
\end{lem}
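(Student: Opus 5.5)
The lemma asserts that $\{0\}\times V\subset\Omega$, where $\Omega$ is the (open) attracting basin of $(0,0)$. On the invariant line $\{z=0\}$ we have $F(0,w)=(0,Q_0(w))$, so for $w_0\in V$ the orbit $(0,Q_0^n(w_0))$ converges to $(0,0)$. The real content is openness: I will show that every point $(z,w)$ in a small $\mathbb{C}^2$-neighborhood of $(0,w_0)$ also converges to $(0,0)$. This gives $\{0\}\times V\subset\Omega$ even when $\Omega$ is taken to be the open basin or its Fatou component.

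\textbf{Step 1: a uniform absorbing region.} I would pass to the parabolic coordinate $u=1/w$. A direct expansion of $w_1=w(1-2z-w)$ gives
\[
u_1=\frac{u}{1-2z-1/u}=u(1+2z)+1+E(z,u),\qquad |E(z,u)|\le C\bigl(|z|^2|u|+|z|+1/|u|\bigr),
\]
valid for $|z|<\delta$ and $|u|>R$. Let $H_R=\{\Re u>R,\ |\Im u|<\Re u\}$ be a sector-type petal. The claim is that for $\delta$ small and $R$ large, every start $(z_0,u_0)$ with $|z_0|<\delta$ and $u_0\in H_R$ satisfies $|z_n|\le \delta(3/4)^n$ and $\Re u_n\ge \Re u_0+n/2$ for all $n$.

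\textbf{Step 2: the induction, which is the main obstacle.} The bound on $z_n$ is immediate from $|P(z)|\le |z|(|z|+1/2)$. The delicate term is $2z_nu_n$, since $u_n$ grows. Two a priori bounds handle it:
\begin{itemize}
\item inductively $|u_n|\le |u_0|+2n$, so $|z_nu_n|\le \delta(3/4)^n(|u_0|+2n)$;
\item since $u_0\in H_R$, we have $|u_0|\le 2\Re u_0$.
\end{itemize}
Hence $\sum_n |z_nu_n|\le C'\delta\,\Re u_0 + C'\delta$. The factor multiplying $\Re u_0$ is a problem. To avoid it, I would shrink the $z$-neighborhood depending on $u_0$, which is legitimate because the required neighborhood may depend on $w_0$. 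Concretely, I require $|z_0|\,|u_0|<\delta$, and then the cumulative drift $\sum_n|z_nu_n|$ is $O(\delta)$. The remaining errors $\sum_n(|z_n|+1/|u_n|)$ are controlled by the inductive lower bound $\Re u_n\ge R+n/2$. For $1/|u_n|$ this step per step is $O(1/R)$ and only just fails to be summable; it is the standard Fatou-coordinate estimate and can be absorbed into the slack between the increment $1$ and the required increment $1/2$. With all errors small, the increments of $\Re u_n$ stay at least $1/2$ and the sector condition $|\Im u_n|<\Re u_n$ is preserved. Therefore $w_n=1/u_n\to 0$ and $z_n\to0$.

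\textbf{Step 3: reaching the petal.} Given $w_0\in V$, choose $N$ with $1/Q_0^N(w_0)$ in the interior of $H_{2R}$. By continuity of $F^N$, there is a neighborhood $W$ of $(0,w_0)$ on which the following hold:
\begin{itemize}
\item $F^N(W)\subset\{|z|<\eta\}\times\{1/w\in H_R\}$;
\item $|z|\,|1/w|<\delta$ on $F^N(W)$, which is achievable because $z=0$ exactly at the center $(0,w_0)$.
\end{itemize}
Applying Step 2 to $F^N(W)$ shows that all of $W$ converges to $(0,0)$, so $(0,w_0)$ is an interior point of the basin, i.e.\ $(0,w_0)\in\Omega$. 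Since $w_0\in V$ was arbitrary, $\{0\}\times V\subset\Omega$.
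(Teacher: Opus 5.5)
Your proof is correct and follows essentially the paper's route. Your constraint $|z|\,|1/w|<\delta$ on a petal is the paper's cone $T_{\epsilon,\delta}=\{|z|<\epsilon|w|,\ w\in V_{\epsilon,\delta}\}$, your drift term $2z_nu_n$ is the paper's $2t$ in $1/w_1=1/w+1+2t+\cdots$ under the blow-up $z=tw$, and your Step 3 spells out the paper's terse conclusion from $V\cap T_{\epsilon,\delta}\neq\emptyset$; the only difference is that the paper reads off the geometric decay of $t_n=z_n/w_n$ directly from the blown-up map, whereas you obtain it from the separate bounds $|z_n|\le|z_0|(3/4)^n$ and $|u_n|\le|u_0|+2n$.
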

\begin{proof}
Under the blow-up $(z,w)=(tw,w)$, the blow-up map $\tilde{F}(t,w)$ takes the form
$$\begin{aligned}
t_1 &=\frac{1}{2} t(1-w+O(w^2)),\\
w_1 &=w(1-w-2tw).
\end{aligned}$$
Clearly, we have that $|t_n|\lesssim \lambda'^n$ for any $\lambda'\in (1/2,1)$ when $n\gg 1$.
	
Now, for any $(z,w)\in \Omega$, write $(z_n,w_n)=F^n(z,w)$. Clearly, $|z_n|\sim \lambda^n$ when $n\gg 1$. To estimate $w_n$, we note that
$$\frac{1}{w_1}=\frac{1}{w}+1+2t+\cdots.$$
Using the estimate of $t_n$, we get that $w_n\sim 1/n$ when $n\gg 1$.
	
This shows that, for $\epsilon,\delta>0$ small enough, the cone $T_{\epsilon,\delta}:=\{(z,w):\ |z|<\epsilon |w|,\ w\in V_{\epsilon,\delta}\}$ is inside $\Omega$. Since $V\cap T_{\epsilon,\delta}\neq \emptyset$, it follows that $V\subset \Omega$.
\end{proof}
		
For any $\delta>0,$ define the trapezoid $R_\delta=\{w=u+iv, -2\delta\le u\le -\delta, |v|\le u/4\}$.

\begin{lem}\label{L:R}
For any $|z|\ll \delta$, $Q_z(R_\delta)$ contains $R_{\delta+3\delta^2/2}$ as a compact subset.
\end{lem}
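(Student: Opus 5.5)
The plan is to verify the claim first for $z=0$ with a quantitative margin of order $\delta^2$, and then to absorb the term $-2zw$ as a perturbation. I read the definition of $R_\delta$ as $|v|\le |u|/4$, since $u<0$. Then $R_\delta$ is the closed region in the sector $\{|v|\le |u|/4\}$ cut out by the two vertical lines $u=-\delta$ and $u=-2\delta$. Set $\delta'=\delta+3\delta^2/2$, so that $R_{\delta'}$ has vertical edges at $u=-\delta-3\delta^2/2$ and $u=-2\delta-3\delta^2$ and lies in the same sector.

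\emph{Step 1: $Q_0=w-w^2$ on $\partial R_\delta$.} On a neighborhood of $R_\delta$ we have $Q_0'(w)=1-2w=1+O(\delta)$. Hence $Q_0$ is univalent there, and $Q_0(R_\delta)$ is the closed Jordan domain bounded by $Q_0(\partial R_\delta)$. I then compute the image of each of the four edges.

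\begin{itemize}
\item On the right edge $w=-\delta+iv$ with $|v|\le\delta/4$, we get $\Re Q_0=-\delta-\delta^2+v^2\ge -\delta-15\delta^2/16$. This lies to the right of $u=-\delta-3\delta^2/2$ by at least $\delta^2/2$.
\item On the left edge $w=-2\delta+iv$ with $|v|\le \delta/2$, we get $\Re Q_0=-2\delta-4\delta^2+v^2\le -2\delta-15\delta^2/4$. This lies to the left of $u=-2\delta-3\delta^2$ by at least $3\delta^2/4$.
\item On the slanted edges $w=u\pm i|u|/4$, we get $\Re Q_0=u-\tfrac{15}{16}u^2$ and $|\Im Q_0|=\tfrac{|u|}{4}(1+2|u|)$. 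The ratio $|\Im|/|\Re|$ is then
$$\frac14\cdot\frac{1+2|u|}{1+\tfrac{15}{16}|u|}=\frac14\Bigl(1+\tfrac{17}{16}|u|+O(u^2)\Bigr).$$
So these image points lie outside the sector $|v|\le|u|/4$, at Euclidean distance $\gtrsim |u|^2\sim\delta^2$ from it.
\end{itemize}

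Thus $Q_0(\partial R_\delta)$ stays at distance at least $c\delta^2$ from $R_{\delta'}$, for some absolute $c>0$.

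\emph{Step 2: $R_{\delta'}$ lies inside $Q_0(R_\delta)$.} I would take a point of $R_{\delta'}$, for instance $-\tfrac32\delta$, and check that it lies in $Q_0(R_\delta)$. Since $R_{\delta'}$ is connected and disjoint from the Jordan curve $Q_0(\partial R_\delta)$, the whole of $R_{\delta'}$ then lies in the bounded complementary component $Q_0(\operatorname{int}R_\delta)$. Moreover it does so with margin $c\delta^2$.

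\emph{Step 3: perturbation in $z$.} For $w\in R_\delta$ we have $|Q_z(w)-Q_0(w)|=2|z||w|\le 4|z|\delta$. This is $<c\delta^2/2$ as soon as $|z|<c\delta/8$, which is the meaning of $|z|\ll\delta$. Fix $p\in R_{\delta'}$. Then $|Q_0-p|\ge c\delta^2>|Q_z-Q_0|$ on $\partial R_\delta$. By Rouché's theorem, $Q_z-p$ and $Q_0-p$ have the same number of zeros in $\operatorname{int}R_\delta$, namely one. Hence $R_{\delta'}\subset Q_z(\operatorname{int}R_\delta)$. Since $R_{\delta'}$ is closed and stays at positive distance from $Q_z(\partial R_\delta)$, it is a compact subset of $Q_z(R_\delta)$; indeed it lies in the interior.

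The main obstacle I expect is the slanted edges. There the gain over the sector is only second order, as the factor $1+\tfrac{17}{16}|u|$ shows. One must check that the $O(u^2)$ error terms and the horizontal shifts of $R_{\delta'}$ do not eat this margin. Because $R_{\delta'}$ uses the same cone through the origin, only the aperture comparison matters. The key inequality is that $(1+2|u|)/(1+\tfrac{15}{16}|u|)-1\ge |u|$ for small $|u|$, and this holds with room to spare.
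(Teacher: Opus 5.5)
Your proof is correct and follows essentially the paper's route: the same four edge estimates for $Q_0$ (right edge real part at least $-\delta-\delta^2$, left edge at most $-2\delta-15\delta^2/4$, slanted edges pushed outside the cone, where the paper's $V-U/4=-17u^2/64$ corresponds to your ratio $1+\tfrac{17}{16}|u|+O(u^2)$), a check at one interior point, and a topological containment argument. You also supply two justifications the paper leaves implicit: the Rouch\'e step that justifies reducing from $Q_z$ to $Q_0$, and univalence plus the Jordan curve theorem in place of the paper's bare appeal to the open mapping theorem; the only slip is cosmetic, since on $R_\delta$ one has $|w|\le \sqrt{17}\,\delta/2$ rather than $2\delta$, which only changes the constant in $|z|<c\delta/8$.
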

\begin{proof}
Since $|z|\ll \delta$, it suffices to consider $Q_0$. Write $U+iV=Q_0(u+iv)$.
		
First, we estimate $Q_0$ on the four boundary curves of $R_\delta$:
$$L_1=\{-\delta+iv,\ |v| \le \delta/4\},\ \ \ \ \ \ L_2=\{-2\delta+iv,\ |v| \le \delta/2\},$$
$$L_3=\{u+iu/4;\ -2\delta \le u \le -\delta\},\ \ \ \ \ \ L_4=\{u-iu/4,\ -2\delta \le u \le -\delta\}.$$
		
On $L_1$: $Q_0(-\delta+iv)=-\delta-\delta^2+v^2+i(v+2\delta v), |v|<\delta/4$. Thus, $U=-\delta-\delta^2+v^2\in[-\delta-\delta^2, -\delta-\frac{15}{16}\delta^2].$
			
On $L_2$: $Q_0(2\delta+iv)=-2\delta-4\delta^2+v^2+i(v+4\delta v), |v|<\delta/2$. Thus, $U=-2\delta-4\delta^2+v^2\in[-2\delta-4\delta^2, -2\delta-\frac{15}{4}\delta^2].$

On $L_3$: $-2\delta \le u \le -\delta, v=u/4$. Then,
$$U=u-u^2+v^2=u-\frac{15}{16} u^2,\ \ \ \ \ \ V=v-2uv=\frac{u}{4}-\frac{u^2}{2},\ \ \ \ \ \ V-U/4=-\frac{17u^2}{64}<-\delta^2/4.$$

By symmetry, we have $V-U/4>\delta^2/4$ on the image of $L_4$.

It is also easy to see that $Q_0(3\delta/2)\in R_{\delta+3\delta^2/2}$. Thus, by the open mapping theorem, we see that $R_{\delta+3\delta^2/2}\subset Q_0(R_\delta)$. The compactness follows from the above estimates.
\end{proof}

\begin{lem}\label{L:w=0}
Let $z\in U$ and $\epsilon>0$. Then there exist points $w', w''$ with $0<|w'|, |w''|<\epsilon$ such that the orbit of $(z,w')$ goes to $\infty$ while $(z,w'')$ is in $\Omega$.
\end{lem}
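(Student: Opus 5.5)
A natural plan is to treat the fiber over the orbit $z_n=P^n(z)$. Since $z\in U$, we have $z_n\to 0$ geometrically, $|z_n|\lesssim \lambda'^n$. It therefore suffices to work in the region where $|z_n|\ll\delta$, after replacing $(z,w)$ by $F^N(z,w)$. Here is the justification. The fiber maps $Q_{z_j}$ for $j<N$ are polynomials, hence open and surjective maps of $\mathbb{C}$. Suppose that near every point of the fiber over $z_N$ we find both escaping points and points of $\Omega$. Pulling back by $Q_{z_{N-1}}\circ\cdots\circ Q_{z_0}$, which maps a small disc around $0$ onto a neighborhood of $0$ because $0$ is fixed ($Q_z(0)=0$ for all $z$), we obtain such points in $\{0<|w|<\epsilon\}$ over $z$. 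So without loss of generality $|z|$ is as small as we like compared with $\epsilon$, and in particular $|z_n|\ll\delta$ for all $n\ge 0$ whenever $\delta\ge |z|^{1/2}$, say.

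\emph{Points in $\Omega$.} In the proof of $V\subset\Omega$ we saw that the cone $T_{\epsilon,\delta}=\{|z|<\epsilon|w|,\ w\in V_{\epsilon,\delta}\}$ lies in $\Omega$. Given small $z$, choose $w''$ with $|z|/\epsilon<|w''|<\epsilon$ and $w''$ in the sector $V_{\epsilon,\delta}$ (positive real direction). Then $(z,w'')\in T_{\epsilon,\delta}\subset\Omega$, and $0<|w''|<\epsilon$ as required. This step is routine.

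\emph{Escaping points.} Use Lemma \ref{L:R}. Fix $\delta_0$ with $|z|\ll\delta_0<\epsilon/2$, and define $\delta_{k+1}=\delta_k+3\delta_k^2/2$. The key properties are as follows.
\begin{itemize}
\item The sequence $\delta_k$ increases, and it leaves every small neighborhood: if it stayed below some small $\eta$, then the increments $3\delta_k^2/2\ge 3\delta_0^2/2$ would sum to infinity.
\item The hypothesis of Lemma \ref{L:R} continues to hold along the orbit, since $|z_k|\le|z|\ll\delta_0\le\delta_k$.
\end{itemize}
Hence $Q_{z_k}(R_{\delta_k})\supset R_{\delta_{k+1}}$ for each $k$. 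Pull back compactly: by the usual nested-preimage argument, the sets
$$K_n=\{w\in R_{\delta_0}:\ Q_{z_{k-1}}\circ\cdots\circ Q_{z_0}(w)\in R_{\delta_k}\ \text{for } k\le n\}$$
are nonempty compacta, decreasing in $n$, as long as the lemma applies. This gives $w'\in R_{\delta_0}$, so $0<|w'|\le 3\delta_0<\epsilon$, whose fiber orbit $w_k$ stays in $R_{\delta_k}$ with $\Re w_k\le-\delta_k$ as long as $\delta_k$ is small.

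Lemma \ref{L:R} is only valid for small $\delta$, so the orbit must be followed until it reaches a fixed definite region $\{\Re w\le -c\}$, where $c>0$ is a fixed small constant, and then shown to escape from there. Once $\Re w\le -c$ with $|z|$ tiny, the map is $w_1=w(1-w)+O(|z||w|)$, so $\Re w_1\le\Re w-c'$ and $|w_1|\ge|w|(1+c)$. The real part therefore keeps decreasing until $|w|\ge5$. By item 3 of the preliminary estimates, $|w|\ge 5$ is then forward-invariant and never returns to the origin, so the orbit does not converge to $0$. To get genuine escape to $\infty$, note that $|w_1|\ge|w|(|w|-4)$ forces $|w_n|\to\infty$ once $|w|>5$.

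The main obstacle I expect is exactly this transition: making the constant in ``$|z|\ll\delta$'' in Lemma \ref{L:R} uniform up to a fixed $\delta=c$, so the chain of trapezoids reaches a definite region; and then checking that the region $\{\Re w<-c\}$ (with small $|\Im w|$) is pushed leftward by $Q_z$ uniformly for small $z$. I would handle this by estimating $Q_z(w)-Q_0(w)=-2zw$ directly, which is negligible relative to $\delta^2$ when $|z|\ll\delta$.
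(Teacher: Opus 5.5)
Your skeleton matches the paper's: reduce to small $|z|$, take $w''$ in the cone $T_{\epsilon,\delta}$, and get $w'$ from nested pullbacks of the trapezoids of Lemma \ref{L:R}. The gap is in the extra step you add because you read Lemma \ref{L:R} as valid only for small $\delta$. After stopping the chain at $\delta_k\approx c$ with $c$ small, all you know is $w_k\in R_c$, i.e.\ $\Re w_k\le -c$ and $|\Im w_k|\le |\Re w_k|/4$. One step of $Q_z$ does push such a point left. But the condition is not forward invariant: for $Q_0$ one has $v_1=v(1-2u)$ while $|u_1|\le |u|+u^2$, so $|\Im w|/|\Re w|$ grows by a factor $\approx 1+|u|$ per step. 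Hence ``the real part keeps decreasing until $|w|\ge 5$'' is unjustified.

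It is in fact false on $R_c$. In the coordinate $W=1/w$, the map $Q_0$ becomes $W\mapsto W+1+1/W+O(|W|^{-2})$. For $w_0=-c+ic/5\in R_c$ one has $\Im W_0=-5/(26c)$. This changes only by a relative $O(c)$ along the orbit, while $\Re W_n\to+\infty$. So $w_n\to 0$ and $w_0$ lies in the parabolic basin $V$, and likewise for tiny $z$. Near $0$ the escaping set of $Q_0$ is contained in a cusp $|\Im w|=O(|w|^2)$ about the negative real axis. So no forward estimate starting from all of $R_c$, $c$ small, can give escape. Your point stays in the trapezoids only because of the pullback choice, and that choice has to be continued.

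The remedy is that no truncation is needed, because Lemma \ref{L:R} holds for every $\delta>0$. The computations on $L_1,\dots,L_4$ and at the midpoint $-3\delta/2$ are exact identities for $Q_0$. Their margins are $\delta^2/2$, $3\delta^2/4$, $17\delta^2/64$ on the edges and of order $\delta$ at the midpoint, and none uses smallness of $\delta$. Meanwhile $|2zw|=O(|z|\delta)$ on $R_\delta$. So $|z|\le c_0\delta$ for a suitable absolute constant $c_0$ suffices for all $\delta$. This persists along the orbit, since $|z_k|\le |z|$ and $\delta_k$ increases.

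Hence every $K_n$ is nonempty and $\bigcap_n K_n\neq\emptyset$. Any $w'$ in it satisfies $\Re w_n\le -\delta_n\to-\infty$, which is exactly the paper's choice $w'\in\bigcap_n Q_{z_n}^{-n}(R_{\delta_n})$. Equivalently, run the chain until $\delta_k\ge 5$ and then use item 3.

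Two minor points:
\begin{enumerate}
\item $|w'|\le 3\delta_0$ with $\delta_0<\epsilon/2$ does not give $|w'|<\epsilon$; take $\delta_0<\epsilon/3$.
\item In your reduction the new radius $\epsilon'$ depends on $N$, so ``$|z_N|\ll\epsilon'$'' needs a justification. For instance, $Q_{z_{N-1}}\circ\cdots\circ Q_{z_0}$ is nearly linear on $|w|\lesssim 1/N$, while $|z_N|$ decays geometrically. The paper also passes over this point.
\end{enumerate}
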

\begin{proof}		
Since both $\Omega$ and the basin of attraction of $\infty$ are completely invariant under $F$, it suffices to prove the lemma for some large iterate $P^n(z)$. Hence, we assume without loss of generality that $0<|z|\ll 1$.

The existence of $(z,w'')$ is easy, just take an appropriate preimage of a point in the cone $T_{\epsilon,\delta}$.

The existence of $(z,w')$ follows from Lemma \ref{L:R}. Indeed, we can take $w'\in \cap_{n\ge 1} Q_{z_n}^{-n}(R_{\delta_n})$, where $\delta_0:=\delta$ and $\delta_n:=\delta_{n-1}+3\delta_{n-1}^2/2$, $n\ge 1$.

\end{proof}

As an immediate corollary, we have

\begin{cor}\label{C:U}
$U\subset \partial \Omega$.
\end{cor}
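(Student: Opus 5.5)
We identify $U$ with the slice $U\times\{0\}$ in $\mathbb{C}^2$. The corollary then says two things: every such point is a limit of points of $\Omega$, and no such point lies in $\Omega$ itself. Both will be read off directly from Lemma \ref{L:w=0}.

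Fix $z\in U$. For the closure part, apply Lemma \ref{L:w=0} with $\epsilon=1/k$ for each $k$. This gives points $w''_k$ with $0<|w''_k|<1/k$ and $(z,w''_k)\in\Omega$. Since $(z,w''_k)\to(z,0)$, we get $(z,0)\in\overline{\Omega}$.

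For the exclusion part, Lemma \ref{L:w=0} also gives points $w'_k$ with $0<|w'_k|<1/k$ whose orbits under $F$ tend to $\infty$. Such points do not lie in $\Omega$, since $\Omega$ consists of points whose orbits converge to $(0,0)$. So every neighborhood of $(z,0)$ meets the complement of $\Omega$. Because $\Omega$ is open (the basin of an attracting point is open), $(z,0)$ cannot be an interior point of $\Omega$, and hence $(z,0)\notin\Omega$.

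Combining the two parts gives $(z,0)\in\overline{\Omega}\setminus\Omega=\partial\Omega$. As $z\in U$ was arbitrary, $U\times\{0\}\subset\partial\Omega$.

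The only step needing care is the openness of $\Omega$, on which the exclusion part rests. It can be checked directly here. The line $\{z=0\}$ is invariant and $(0,0)$ is semi-attracting. On the line $\{z=0\}$ the map is $w\mapsto w-w^2$, so the slice of $\Omega$ there is $\{0\}\times V$, and $V$ is a parabolic basin not containing $0$. Consequently $(0,0)$ lies on the boundary of $\Omega$ rather than in it, and it is cleaner to take $\Omega$ to mean the set of points attracted to the origin through the open cone $T_{\epsilon,\delta}$ together with all its preimages. This set is open because it is a union of preimages of an open set under the continuous map $F$.
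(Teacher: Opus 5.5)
Your argument is essentially the paper's. The paper records the corollary as immediate from Lemma~\ref{L:w=0}: the points $(z,w'')$ put $(z,0)$ in $\overline{\Omega}$, and the escaping points $(z,w')$ show that $(z,0)$ is not an interior point.

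One correction: the justification ``the basin of an attracting point is open'' does not apply here. The point $(0,0)$ is only semi-attracting, and the set of all points whose orbits tend to $(0,0)$ actually contains $U\times\{0\}$, because the line $w=0$ is invariant. You do not need openness at all, since $\partial\Omega=\overline{\Omega}\setminus\operatorname{int}\Omega$ and the $w'$-points already exclude $(z,0)$ from the interior. Your alternative of redefining $\Omega$ as the union of preimages of $T_{\epsilon,\delta}$ also works.
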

Figure \ref{fig1} shows the orbits near $U$ for $F=(\lambda z-z^2,w-2zw-w^2)$, $0<\lambda<1$. The orbits of some points go to $\infty$, some points go to $0$. It also illustrates that $U$ is in the boundary of $\Omega$ which is the above Corollary.
\begin{figure}[!htb]
	\centering
	\includegraphics[width=1\textwidth,height=0.35\textheight]{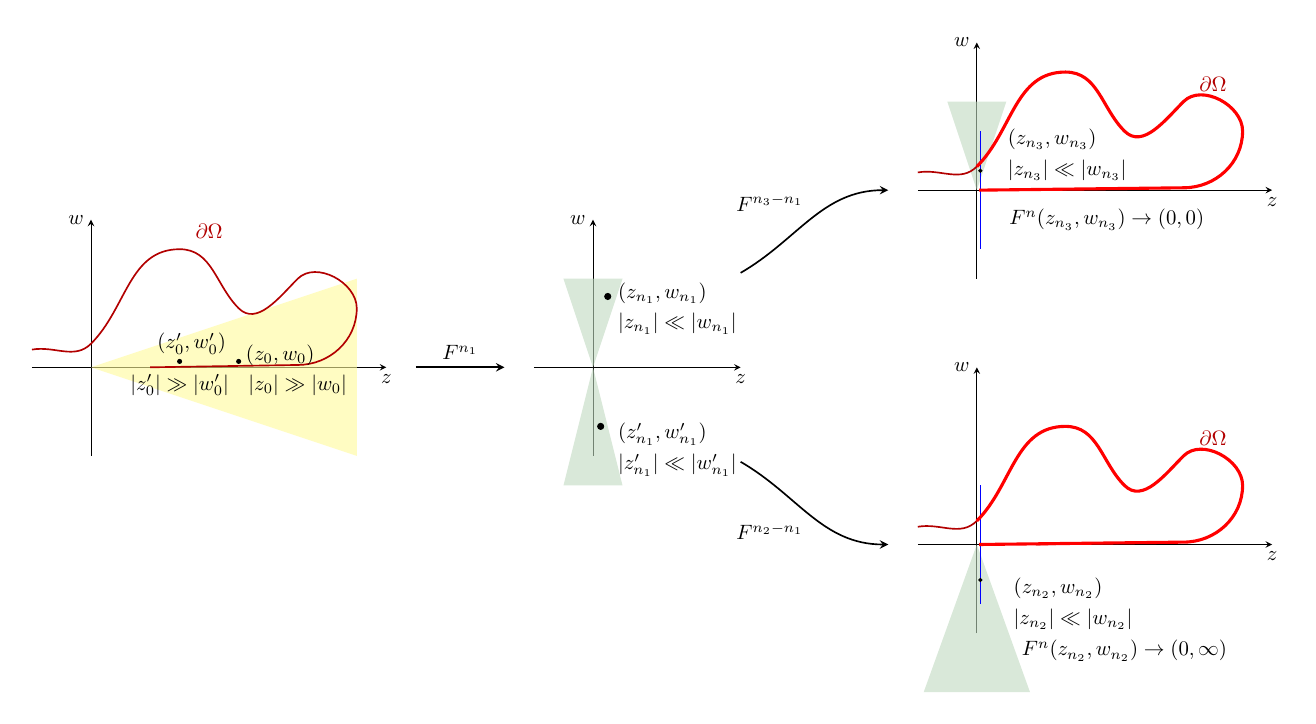}
	\caption{Orbits near the origin for $F=(\lambda z-z^2,w-2zw-w^2)$, $0<\lambda<1$.}
	\label{fig1}
\end{figure}

\begin{lem}
For any point $(z,w)\in \Omega$, its forward orbit enters the cone $T_{\epsilon,\delta}=\{(z,w):\ |z|<\epsilon |w|,\ w\in V_{\epsilon,\delta}\}$.
\end{lem}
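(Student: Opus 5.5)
Take $(z,w)\in\Omega$ and write $(z_n,w_n)=F^n(z,w)$ with $\lambda=1/2$. Then $(z_n,w_n)\to(0,0)$. Since $U$ is the attracting basin of $P$ at $0$ with $P'(0)=1/2$, we have $|z_n|\le C\lambda'^n$ for any fixed $\lambda'\in(1/2,1)$ and $n\gg1$. The plan is to show that $w_n$ cannot decay as fast as $z_n$, and that $w_n$ eventually lies in the parabolic petal $V_{\epsilon,\delta}$ of $Q_0(w)=w-w^2$. Together these give $|z_n|<\epsilon|w_n|$ and $w_n\in V_{\epsilon,\delta}$, i.e. $(z_n,w_n)\in T_{\epsilon,\delta}$.

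\textbf{Step 1: $w_n\neq 0$ for all $n$.} The line $\{w=0\}$ is invariant. By Corollary \ref{C:U}, $U\times\{0\}\subset\partial\Omega$, and $\Omega$ is open, so $w\neq 0$. Also $Q(z,w)=w(1-2z-w)$, and $1-2z_n-w_n\neq0$ for $n$ large, since $(z_n,w_n)\to 0$. For the finitely many earlier indices, a vanishing $w_n$ would put the point $F^n(z,w)\in\Omega$ on $\{w=0\}$, which is impossible. So $w_n\neq0$ for all $n$.

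\textbf{Step 2: $|z_n|\ll|w_n|$.} For $n\ge N$ we have
\[
w_{n+1}=w_n\bigl(1-2z_n-w_n\bigr),\qquad \frac{|w_{n+1}|}{|w_n|}\ge 1-2|z_n|-|w_n|,
\]
so $\log|w_n|\ge \log|w_N|-\sum_{k=N}^{n-1}(2|z_k|+2|w_k|)$. This alone is not enough, because $\sum|w_k|$ may diverge. Instead I would compare with the 1D parabolic dynamics in $u=1/w$:
\[
\frac1{w_{n+1}}=\frac1{w_n}+1+\frac{2z_n}{w_n}+O\!\left(|w_n|+\frac{|z_n|}{|w_n|}\right)\cdot(\text{bounded}).
\]
The key observation is that $|z_n|/|w_n|$ cannot stay large. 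Suppose $|z_n|\ge\epsilon|w_n|$ for all large $n$. Then $|w_n|\lesssim\lambda'^n$, so $\sum|w_k|<\infty$, and the product $\prod(1-2z_k-w_k)$ converges to a nonzero limit. Hence $|w_n|\ge c>0$, contradicting $w_n\to0$. So there are infinitely many $n$ with $|z_n|<\epsilon|w_n|$.

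To get this for all large $n$, I would first use the summable $z_k$ to conclude $|w_n|\gtrsim 1/n$: the factor $\prod(1-2z_k)$ is harmless and $w\mapsto w-w^2$ decays at most like $1/n$. Since $\lambda'^n\ll 1/n$, this gives $|z_n|/|w_n|\to0$.

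\textbf{Step 3: $w_n$ enters the petal.} Set $u_n=1/w_n$. By Step 2, $u_{n+1}=u_n+1+\eta_n$ with $\eta_n=O(|z_n|/|w_n|+|w_n|)\to0$. Since $u_n\to\infty$, a standard Fatou-coordinate argument shows $\arg u_n\to0$: the increments are $1+o(1)$, so $u_n\sim n$. Hence $w_n\in V_{\epsilon,\delta}$ for $n$ large, which completes the proof.

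\textbf{Main obstacle.} The hardest step is the lower bound $|w_n|\gtrsim1/n$ (equivalently $|u_n|\lesssim n$). It requires ruling out that $u_n$ grows faster, for example through a transient phase where $|z_n|/|w_n|$ is large. I would handle this by estimating directly
\[
|u_{n+1}|\le |u_n|\,\bigl(1+2|z_n|+O(|z_n|^2)\bigr)+1+O(|w_n|),
\]
which comes from expanding $1/(1-2z-w)$. The factor $\prod(1+2|z_k|)$ is bounded because the $|z_k|$ are summable, so a discrete Grönwall argument gives $|u_n|\le C n$.
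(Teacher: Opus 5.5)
Your argument is correct and is essentially the paper's. Both proofs pass to $W=1/w$ and bound the growth of $|W_n|$ well enough to beat the exponential decay of $z_n$. You get $|W_n|\le Cn$ from an additive Gr\"onwall estimate; the paper gets the cruder $|W_n|\le Ke^{2n/L}$ for arbitrary $L$, and either suffices. Both then deduce $z_nW_n\to0$ and hence $W_n=n+o(n)$.

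Your write-up is more complete than the paper's, since you justify $w_n\neq0$ via Corollary~\ref{C:U} and derive the angular condition $\arg w_n\to0$, which the paper only asserts. One harmless correction: the error term in your expansion of $1/w_{n+1}$ should also carry an $O(|z_n|)$ summand.
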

\begin{proof}
We know that the orbit $(z_n,w_n)$ converges to zero. Using the coordinates $(Z,W)=(z,1/w)$, we have
$$W_{n+1}=\frac{W_n}{1-2Z_n-1/W_n},$$
which implies that
$$|W_{n+1}|\leq |W_n|e^{3|Z_n|}e^{2/|W_n|}.$$
Since $W_n \rightarrow \infty,$ we can assume that for large $n$, $|W_n|>L$ for any given $L$.

Hence, we get asymptotically that $|W_n|\leq K e^{2n/L}$. This implies that $W_n Z_n\rightarrow 0$, i.e., $|z_n|\leq \epsilon |w_n|$ for all large $n.$ It also implies that for all large $n$, we have $|\Im w_n|<\delta \Re w_n/4$.
\end{proof}

\begin{defn}
A connected component $\Omega^0$ of $\Omega$ is said to be an immediate basin of attraction if $F(\Omega^0)\subset \Omega^0$.
\end{defn}

\begin{cor}
There is exactly one immediate basin, $\Omega^0$.
\end{cor}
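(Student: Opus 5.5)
The plan is to show two things: the cone $T_{\epsilon,\delta}$ is connected and forward invariant, and every immediate basin must meet it. Existence and uniqueness then follow from the preceding lemma.

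First I fix $\epsilon,\delta>0$ small, as in the lemma $V\subset\Omega$, so that $T_{\epsilon,\delta}\subset\Omega$. The set $T_{\epsilon,\delta}$ is connected. Its fiber over each $w\in V_{\epsilon,\delta}$ is the disc $\{|z|<\epsilon|w|\}$, which is connected, and the base sector $V_{\epsilon,\delta}$ is connected. Let $\Omega^0$ be the connected component of $\Omega$ containing $T_{\epsilon,\delta}$.

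To show $\Omega^0$ is an immediate basin, I use that $F$ is continuous and $F(\Omega)\subset\Omega$. Hence $F(\Omega^0)$ is a connected subset of $\Omega$, and it suffices to check that it meets $\Omega^0$. Take a point of $T_{\epsilon,\delta}$ whose forward orbit stays in the cone. Such points exist: the blow-up estimates in the proof of $V\subset\Omega$ give $t_n\to0$ geometrically and $w_n\sim 1/n$ inside a thin sector. So $F$ maps a subcone into $T_{\epsilon,\delta}\subset\Omega^0$. Alternatively, apply the preceding lemma to a single point of $\Omega^0$: some iterate $F^n(p)$ lies in $T_{\epsilon,\delta}$. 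If $F(\Omega^0)$ were contained in another component $\Omega^1$, then all $F^k(\Omega^0)$ would lie in components, and the chain $\Omega^0\to\Omega^1\to\cdots$ would reach $\Omega^0$ again at step $n$. This gives invariance only up to periodicity, so the direct argument is cleaner. Since $F(T_{\epsilon,\delta})\cap T_{\epsilon,\delta}\neq\emptyset$, we get $F(\Omega^0)\cap\Omega^0\neq\emptyset$, hence $F(\Omega^0)\subset\Omega^0$.

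For uniqueness, let $\Omega^1$ be any immediate basin and pick $p\in\Omega^1$. By the preceding lemma, $F^n(p)\in T_{\epsilon,\delta}\subset\Omega^0$ for large $n$. Also $F^n(p)\in\Omega^1$, because $F(\Omega^1)\subset\Omega^1$. Thus $\Omega^1\cap\Omega^0\neq\emptyset$, and since both are components, $\Omega^1=\Omega^0$.

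The only delicate point is the cone's self-mapping property, namely that $F(T_{\epsilon,\delta})$ meets $T_{\epsilon,\delta}$. It follows from the asymptotics $|t_{n+1}|\approx|t_n|/2$ and $1/w_{n+1}=1/w_n+1+O(t_n)$. These show that a point with $w$ small, positive and real, and $z=0$, stays in the cone. In fact the real segment $\{z=0,\ 0<w<\epsilon\}$ is invariant under $F$, since $F(0,w)=(0,w-w^2)$, and it lies in the cone. This settles the issue immediately.
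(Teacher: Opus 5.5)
Your argument is correct and is essentially the paper's reasoning written out: the paper gives no separate proof and treats the statement as an immediate consequence of the lemma that every forward orbit in $\Omega$ enters the cone $T_{\epsilon,\delta}$, which is exactly what you use for uniqueness. For existence, your invariant segment $\{0\}\times(0,\epsilon)\subset T_{\epsilon,\delta}$ settles the forward invariance of the component containing the cone cleanly, so the subcone and ``periodicity'' detours in your middle paragraph can simply be deleted.
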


\begin{lem}\label{L:pi1}
There exists a surjective map $\pi_1:\Omega^0\rightarrow U\times \{w=0\}$.
\end{lem}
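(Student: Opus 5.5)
The map will be the restriction of the projection $\pi_1(z,w)=(z,0)$ to $\Omega^0$. Two things have to be checked: the image lies in $U\times\{w=0\}$, and every $z\in U$ is hit. For the first, if $(z,w)\in\Omega^0$ then $z_n=P^n(z)\to 0$, so $z$ lies in the basin of $P$. Moreover, $\pi_1(\Omega^0)$ is connected and contains points near $z=0$, because $\Omega^0$ contains the cone $T_{\epsilon,\delta}$. Hence $\pi_1(\Omega^0)$ lies in the component $U$ containing $0$. (Here I read $U$ as the immediate basin, which is how it is used; by items 1 and 2 it is the component of $\{|z|<3/2\}$-type containing the disc $\{|z|<1/2\}$.) The image $\pi_1(\Omega^0)$ is open, since projections are open maps. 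Because $U$ is connected, surjectivity reduces to showing that $\pi_1(\Omega^0)$ is relatively closed in $U$, or equivalently to a local statement at every point of $U$.

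The local statement I aim for is the following. For each $z^*\in U$ there are a disc $D\ni z^*$, a point $w^*$, and a radius $r>0$ such that $D\times\{|w-w^*|<r\}\subset\Omega^0$, with $z^*$ in the closure of the image. To get it, choose $N$ with $P^N(\overline D)\subset\{|z|<\eta\}$ for a small $\eta\ll\epsilon$. Write $w_N=Q^{(N)}_\zeta(w)$ for the $w$-coordinate of $F^N(\zeta,w)$; this is a polynomial in $w$, nonconstant in $w$ and depending holomorphically on $\zeta$. The region $T^*=\{(z,w): |z|<\eta,\ w\in V_{\epsilon,\delta},\ |z|<\epsilon|w|\}$ is connected, lies in the cone, and so lies in $\Omega^0$.

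Now fix a point $w^\sharp\in V_{\epsilon,\delta}$ with $|w^\sharp|\gg\eta/\epsilon$. For $\zeta\in D$, the equation $Q^{(N)}_\zeta(w)=w^\sharp$ has solutions. Pick a solution $w(\zeta)$ near $z^*$ (a continuous branch after shrinking $D$ away from the finitely many branch points, or a Puiseux branch at a branch point). The points $(\zeta,w(\zeta))$ then lie in $F^{-N}(T^*)\subset\Omega$. What remains is to show they lie in the same component $\Omega^0$, and this is the main obstacle, because a preimage component of $\Omega^0$ need not be $\Omega^0$ itself.

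To handle this I would move $\zeta$ along a path in $U$ from $z^*$ to a point near $0$, keeping track of the small-$w$ regime. For $|w|$ small, $F$ acts on the fiber essentially by $w\mapsto(1-2z)w$, so $w_N\approx c_N(\zeta)w$ with $c_N(\zeta)=\prod_{j<N}(1-2z_j)$. Away from the zeros of $c_N$, i.e.\ away from the preimages of $z=1/2$, the set
$$\{(\zeta,w):\ \zeta\in U,\ c_N(\zeta)w\in V_{\epsilon,\delta},\ |\zeta_N|<\epsilon|c_N(\zeta)w|,\ |w|<\rho(\zeta)\}$$
is connected over a connected base. Since it contains points of $T_{\epsilon,\delta}$ near $\zeta=0$, it lies in $\Omega^0$.

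At the discrete set where some $z_j=1/2$, the leading term becomes $-w_j^2$ instead. I expect that there the same argument works after replacing the linear sector condition by the corresponding condition on $-w_j^2$, and using that the exceptional set is discrete, so removing it leaves the base connected. Closedness of the image near such points then follows from Lemma \ref{L:w=0}-type approximations: points of $\Omega$ over $\zeta$ with $|w|$ arbitrarily small, lying in the sector described above. Combining openness, closedness and connectedness of $U$ gives that $\pi_1$ is surjective onto $U\times\{w=0\}$.
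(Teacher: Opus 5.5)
\textbf{There is a genuine gap: you never show that the points you construct lie in $\Omega^0$.} The rest of your setup is sound. The image of the coordinate projection is an open subset of $U$, $F^{-N}(T^*)\subset\Omega$ gives points of $\Omega$ over any $z^*$, and you correctly identify the crux as showing those points lie in $\Omega^0$ rather than in another component of $\Omega$. The small-$w$ argument does not close this.

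Your small-$w$ set is only known to lie in $\Omega$ if $w_N\approx c_N(\zeta)w$ holds well enough to put $F^N(\zeta,w)$ in the cone.
\begin{itemize}
\item That approximation needs $|w|$ small compared with $\min_{j<N}|1-2\zeta_j|$.
\item The cone condition forces $|w|\gtrsim|\zeta_N|/(\epsilon|c_N(\zeta)|)$.
\item Near a point $\zeta_0$ with $\zeta_{0,j}=1/2$ you have $\zeta_{j+1}=\zeta_j(1-2\zeta_j)/2$. So $\zeta_N$ and $c_N(\zeta)$ vanish to the same order at $\zeta_0$, and $|\zeta_N|/|c_N(\zeta)|$ stays bounded away from $0$, while the admissible upper bound on $|w|$ tends to $0$.
\end{itemize}
Hence, for fixed $N$, the fibres of your set are empty near every such point, and also wherever $|\zeta_N|$ is not small. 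Neither the claimed base nor its connectedness is established, and the exceptional points themselves are left to an expectation.

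Even granting $U\setminus E\subset\pi_1(\Omega^0)$, you would not get $E\subset\pi_1(\Omega^0)$, since a connected open set can project onto a punctured disc. Lemma \ref{L:w=0} only produces points of $\Omega$ with small $w$, not points of $\Omega^0$, so it cannot supply the closedness. Finally, the points $(\zeta,w(\zeta))$ from your first construction are never linked to the small-$w$ set.

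\textbf{The missing idea.} Anchor the construction over $z=0$ on the $w$-axis, where membership in $\Omega^0$ is known, and let properness do the rest. Do not pick a branch at $z^*$ and try to connect it back. This is what the paper uses. In your notation:
\begin{itemize}
\item Let $D_N$ be the component of $P^{-N}(\{|z|<\eta\})$ containing $0$. It contains $z^*$ for $N$ large, because $P^N\to 0$ uniformly on a path from $0$ to $z^*$ in $U$.
\item Choose $w_0\in V$ with $Q_0^N(w_0)=w^\sharp$; this is possible since $Q_0$ maps $V$ onto itself. Let $\Gamma_0$ be the connected component through $(0,w_0)$ of the curve $\{(\zeta,w):\ \zeta\in D_N,\ Q^{(N)}_\zeta(w)=w^\sharp\}$.
\item $Q^{(N)}_\zeta$ is a polynomial in $w$ of degree $2^N$ with leading coefficient $-1$. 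So the projection of this curve to $D_N$ is proper with finite fibres. The projection of $\Gamma_0$ is therefore open and closed in $D_N$, i.e.\ equal to $D_N$.
\item $\Gamma_0\subset F^{-N}(T^*)\subset\Omega$ is connected and meets $\{0\}\times V$. That set is connected, lies in $\Omega$ and meets the cone, so it lies in $\Omega^0$. Hence $\Gamma_0\subset\Omega^0$ and $z^*\in\pi_1(\Omega^0)$.
\end{itemize}
Branch points and preimages of $1/2$ then need no separate treatment.

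The paper does the same thing with the connected configuration made of a segment of the real $w$-axis at $z=0$ together with the disc $\{|z|<\epsilon\delta/3\}\times\{\delta/2\}$. It takes successive connected preimages that contain a piece of the $w$-axis. Their projections are the components of $P^{-n}(\{|z|<\epsilon\delta/3\})$ containing $0$, and these exhaust $U$.
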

\begin{proof}
For $\delta$ small, take a curve which is a straight line in the $w$ axis from $0$ to $\delta/2.$ Then, attach a small disc $\{(z,w);\ |z|<\epsilon \delta/3,\ w=\delta/2\}.$ This configuration is contained in $\Omega^0.$ Moreove, it covers a disc $\{|z|<\epsilon\delta/3\}.$

Now take consequtive connected preimages always starting with a straight line up the real $w$ axis. These primages cover more and more of $U$ and are always contained in $\Omega^0.$
\end{proof}

\begin{thm}\label{T:C2}
Let $F(z,w)=(\lambda z-z^2,w-2zw-w^2)$, $0<\lambda<1$. Fix $\epsilon>0$ sufficiently small. Denote by $S$ the set of preimages of $(\epsilon,\epsilon)$ under $F$. For any connected component $\tilde{\Omega}$ of $\Omega$, set $\tilde{S}:=S\cap \tilde{\Omega}$. Then, for an arbitrary constant $C>0$, there exists a point $\tilde{p}\in \tilde{\Omega}$ such that for any $q\in \tilde{S}$, the Kobayashi distance $d_{\tilde{\Omega}}(\tilde{p}, q)\ge C$.
\end{thm}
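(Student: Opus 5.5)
The plan is to follow the strategy announced in the introduction: project the immediate basin $\Omega^0$ to the $w$-coordinate and use that the Kobayashi distance does not increase under holomorphic maps. First reduce to $\tilde\Omega=\Omega^0$. Any component $\tilde\Omega$ is mapped by some iterate $F^k$ into $\Omega^0$; take $k$ minimal. Then $F^k:\tilde\Omega\to\Omega^0$ is a proper holomorphic map between components of $\Omega$, hence surjective and a finite branched covering. Preimages of $(\epsilon,\epsilon)$ in $\tilde\Omega$ map to preimages in $\Omega^0$ (or to forward images, which lie close to the origin in the cone $T_{\epsilon,\delta}$). If $\tilde p\in\tilde\Omega$ is chosen with $F^k(\tilde p)=p$ for a point $p\in\Omega^0$ far from $F^k(\tilde S)$, then $d_{\tilde\Omega}(\tilde p,q)\ge d_{\Omega^0}(p,F^k(q))\ge C$ by the distance-decreasing property.

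So the main case is $\Omega^0$. The key geometric input is Corollary~\ref{C:U}, $U\subset\partial\Omega$, together with the bound $|w|<5$ on $\Omega$. Consider the holomorphic function $\pi_2(z,w)=w$ on $\Omega^0$. It does not vanish on $\Omega^0$: the set $\{w=0\}\cap\{|z|<3/2\}$ lies in $U\times\{0\}\subset\partial\Omega$, and points with $w=0$, $z\notin U$ do not converge to the origin at all. Hence $\pi_2:\Omega^0\to\mathbb{D}^*_5$, and
\[
d_{\Omega^0}(p,q)\ \ge\ d_{\mathbb{D}^*_5}(\pi_2 p,\pi_2 q).
\]
In the punctured disc, the Kobayashi (hyperbolic) distance from points $w$ with $|w|\to0$ to any set staying a definite distance from the puncture tends to infinity, roughly like $\log\log(1/|w|)$.

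It then suffices to produce points $p\in\Omega^0$ with $|\pi_2(p)|$ arbitrarily small, while the $w$-coordinates of all $q\in S\cap\Omega^0$ stay bounded below by a fixed $\eta>0$. The first part is easy: by Lemma~\ref{L:w=0}, for any $z\in U$ there are $w''$ with $|w''|$ tiny and $(z,w'')\in\Omega$. Working directly inside the cone $T_{\epsilon,\delta}\subset\Omega^0$ gives points $(0,w)$ with $w\to 0$ along $V_{\epsilon,\delta}$.

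The second part is the main obstacle, namely a lower bound $|w|\ge\eta$ on the backward orbit of $(\epsilon,\epsilon)$ in $\Omega^0$. I would argue by forward dynamics. If $q\in F^{-n}(\epsilon,\epsilon)$ had $|w(q)|$ very small with $q\in\Omega^0$, then the forward orbit of $q$ would have to reach $w=\epsilon$ in $n$ steps. Near $w=0$, the map $w\mapsto w(1-2z-w)$ changes $|w|$ only by factors $1+O(|z|+|w|)$. Since $|z_j|\lesssim\lambda^j$ is summable along orbits in $U$, $\prod_j|1-2z_j|$ is bounded above and below by constants depending on how far $z_0$ is from $\partial U$. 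This is the delicate point, because $z_0$ can approach $\partial U$.

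To handle it, I would first try the alternative of replacing the lower bound with a different choice of $p$: fix the points $p_m=(0,w_m)$ with $w_m\to0$. If $d_{\mathbb{D}^*}(w_m,\pi_2 q)$ stays bounded, then $|\pi_2 q|$ is comparable to a power of $|w_m|$, so $q$ lies near $\{w=0\}$. Iterating forward $n$ times, with $w$ small and the $w$-dynamics essentially parabolic in the positive direction, $q$ needs roughly $1/|w_m|$-many iterates to escape to size $\epsilon$. Meanwhile its $z$-coordinate contracts geometrically. Comparing this with the orbit of $(\epsilon,\epsilon)$, I would aim to show this forces $|z|\ll\epsilon$ on the image, which contradicts $F^n(q)=(\epsilon,\epsilon)$ for $\epsilon$ fixed and small.
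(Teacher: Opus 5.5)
\textbf{There is a genuine gap: the bound you flag as the main obstacle is false, so $\pi_2$ alone cannot give the theorem.}

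Your reduction to $\Omega^0$ is fine, and so is the observation that $\pi_2(z,w)=w$ maps $\Omega^0$ into a punctured disc. The trouble is that you try to get the whole estimate from $\pi_2$. The step you need is a lower bound $|w(q)|\ge\eta$ on $S\cap\Omega^0$, and it fails. The multiplier of $Q_z$ at $w=0$ is $1-2z$, and this need not be close to $1$ for $z$ near $\partial U$. For example, at the repelling fixed point $z=-1/2\in\partial U$ of $P$ (with $\lambda=1/2$) one has $Q_{-1/2}(w)=w(2-w)$.

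Now take backward orbits of $(\epsilon,\epsilon)$ whose $z$-coordinates first reach a small neighbourhood of $-1/2$. Let them then follow the inverse branch of $P$ fixing $-1/2$ for $m$ more steps, choosing each time the small root $w'\approx w/(1-2z')$ of $Q_{z'}(w')=w$. These preimages $q_m$ satisfy $|w(q_m)|\asymp 2^{-m}$. They also lie in $\Omega^0$ (sketch): pulling back the segment $\{(\epsilon,s):0<s\le\epsilon\}$ along the same branches joins them to points with arbitrarily small $w$, and those are joined to the cone $T_{\epsilon,\delta}$.

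These moduli decay only geometrically. So every $w_0$ near the puncture lies within $\mathbb{D}^*_R$-distance $O(1/\log(1/|w_0|))$ of some $w(q_m)$. Taking $|\pi_2(p)|\to 0$, which was your only source of large distances, therefore gives nothing. Both of your heuristics fail for the same reason:
\begin{itemize}
\item along orbits shadowing $\partial U$, the factor $|1-2z-w|$ is not $1+o(1)$;
\item the $z$-coordinate does not contract there;
\item $w$ grows from about $2^{-m}$ to size $\epsilon$ in roughly $m$ steps, not $2^m$.
\end{itemize}

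The missing ingredient is the other projection $\pi_1(z,w)=z$, $\Omega^0\to U$, which you never use. The set $P^{-k}(\epsilon)$ eventually leaves every compact subset of $U$, and $d_U$ is complete. Hence there is $N$ with $d_U(0,z^k)\ge C$ for every $(z^k,w^k)\in F^{-k}(\epsilon,\epsilon)\cap\Omega^0$ with $k\ge N$. Any base point with $z$-coordinate $0$ is therefore $C$-far from all deep preimages. These are exactly the preimages near $\partial U$ that defeat $\pi_2$.

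The preimages with $k<N$ form a finite set, so their $w$-coordinates are bounded away from $0$. Taking $p=(0,w_0)$ with $w_0>0$ small enough then gives $d_{\mathbb{D}^*_R}(w_0,w^k)\ge C$ for them as well. This two-projection split is the paper's proof. In your reduction step, minimality of $k$ already forces $F^k(S\cap\tilde\Omega)\subset S\cap\Omega^0$, so the forward-image case does not occur.
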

\begin{proof}
First, consider $\Omega^0$, which is bounded and contains $U\times \{w=0\}$ on its boundary. Moreover, the projection $\pi_1:\Omega\rightarrow U\times \{w=0\}$ is surjective by Lemma \ref{L:pi1}.

It is easy to see that $(\epsilon,\epsilon)\in \Omega^0$ for $\epsilon>0$ sufficiently small. For any $k\ge 0$, denote by $(z^k,w^k)$ any point in $F^{-k}(\epsilon,\epsilon)\cap \Omega^0$. Since $z^k=P^{-k}(\epsilon)\rightarrow \partial U$ as $k\rightarrow \infty$, there exists $N>0$ such that for any $k\ge N$, we have $d_U(0,z^k)\ge C$. Thus, for any $w_0\in V_0$ and $k\ge N$, we have $d_{\Omega^0}((0,w_0), (z^k,w^k))\ge d_U(0,z^k)\ge C$.

Since $U\times \{w=0\}\subset \partial \Omega^0$, we have a projection $\pi_2:\Omega^0\rightarrow \{z=0\}\times \mathbb{D}^\ast_R$, where $\mathbb{D}^\ast_R$ is a punctured disk with radius $R>0$ big enough. Clearly, there exists $w_0\in V_0$ which is sufficiently close to $w=0$ such that $d_{\mathbb{D}^\ast_R}(w_0,w^k)\ge C$ for every $0\le k\le N-1$. Thus, we have $d_{\Omega^0}((0,w_0), (z^k,w^k))\ge d_{\mathbb{D}^\ast_R}(w_0,w^k)\ge C$.

For any other $\Omega_\alpha$, suppose that $F^{n_\alpha}(\Omega_\alpha)\subset \Omega^0$. Then, there exists a point $(0,w_\alpha)$ such that $d_{\Omega^0}((0,w_\alpha),F^{n_\alpha}(q))\ge C$ for any $q\in S_\alpha$. Let $p_\alpha\in \Omega_\alpha$ such that $F^{n_\alpha}(p_\alpha)=(0,w_\alpha)$. Then, we have $d_{\Omega_\alpha}(p_\alpha, q)\ge d_{\Omega^0}((0,w_\alpha),F^{n_\alpha}(q))\ge C$ for any $q\in S_\alpha$.
\end{proof}

\section{The third family}

In this section, we consider the following family of skew products:
$$F(z,w)=(P(z),Q(z,w))=(\lambda z-z^2,w-w^2-z^2),\ \ \ \ \ \ 0<\lambda<1.$$
For simplicity, suppose that $\lambda=1/2$.

A key property of the second family is that the $z$ axis stays in the boundary of $\Omega$. For the third family, we prove first that there is a local strong stable manifold near the origin. We show that this is in the boundary of $\Omega$ and then we do a local conjugation which sends this manifold to the $z$ axis at the origin. 

\begin{lem}
Let $F(z,w)=(z/2-z^2,w-w^2-z^2)$. Then there exists a complex analytic curve $w=\Psi(z)$ tangent to the $z$-axis in a small disc centered at $z=0$.
\end{lem}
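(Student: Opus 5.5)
We look for an invariant curve $w=\Psi(z)$ through the origin. Such a curve is the strong stable manifold associated with the eigenvalue $1/2$ of $DF(0,0)=\mathrm{diag}(1/2,1)$. Invariance of the graph means $Q(z,\Psi(z))=\Psi(P(z))$, that is,
$$\Psi(z/2-z^2)=\Psi(z)-\Psi(z)^2-z^2 .$$
We will solve this functional equation for a convergent power series $\Psi(z)=\sum_{k\ge 2}c_kz^k$ with $c_0=c_1=0$, which gives tangency to the $z$-axis. The plan is to determine the coefficients formally and then prove convergence, either by a majorant-series argument or, more cleanly, by a fixed point argument in a space of holomorphic functions on a small disc.

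\emph{Formal step.} Compare coefficients of $z^k$. On the left, $\Psi(z/2-z^2)$ contributes $c_k2^{-k}$ plus terms involving only $c_j$ with $j<k$. On the right, $\Psi(z)$ contributes $c_k$, and $\Psi^2$ involves only $c_j$ with $j\le k-2$. This gives
$$c_k(1-2^{-k})=E_k(c_2,\dots,c_{k-1}),$$
and the $-z^2$ term enters $E_2$. Since $1-2^{-k}\ge 1/2$ for $k\ge1$, there is no small divisor, so the coefficients are uniquely determined and $c_2=-1/(1-1/4)=-4/3$.

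\emph{Convergence.} I would rewrite the equation as the fixed point problem
$$\Psi(z)=T\Psi(z):=\Psi(P(z))+\Psi(z)^2+z^2 .$$
This operator is not contracting, because of the $\Psi(z)$ term with coefficient $1$. So I would invert the other way and iterate
$$\Psi=\Psi\circ P^{-1}\text{-type}\ \text{or}\ \Psi(z)=\lim_n Q_{z_{-n}}\cdots,$$
which is awkward. A better route is to invert the linear operator $L\Psi:=\Psi-\Psi\circ P$ on the space $X_r$ of holomorphic functions on $\{|z|<r\}$ vanishing to order $2$ with the norm $\|\Psi\|=\sup|\Psi(z)|/|z|^2$. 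The equation becomes $L\Psi=\Psi^2+z^2$. For $|z|<r$ small we have $|P(z)|\le (1/2+r)|z|$, hence $\|\Psi\circ P\|\le (1/2+r)^2\|\Psi\|<\|\Psi\|$. Therefore $L=I-C_P$ is invertible by a Neumann series, with $\|L^{-1}\|\le 1/(1-(1/2+r)^2)\le 2$.

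The map $\Psi\mapsto L^{-1}(\Psi^2+z^2)$ then sends the ball $\{\|\Psi\|\le 4\}$ into itself and is a contraction when $r$ is small, since $\|\Psi^2\|\le r^2\|\Psi\|^2$ and $\|\Psi_1^2-\Psi_2^2\|\le 8r^2\|\Psi_1-\Psi_2\|$. The Banach fixed point theorem then yields a holomorphic $\Psi$ with $|\Psi(z)|\le 4|z|^2$, which is tangent to the $z$-axis, and its graph is $F$-invariant.

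The main obstacle is the convergence step. The naive contraction fails because the $w$-direction is parabolic (multiplier $1$). The weighted norm $|z|^{-2}$ is what rescues it: composition with $P$ contracts by about $1/4$, so $I-C_P$ is invertible. The paper presumably also needs a local stable-manifold property, but that is not required by this statement.
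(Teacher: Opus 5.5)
Your proof is correct, but it takes a different route from the paper's.

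\textbf{The paper's route.} It first conjugates $P$ to its linear part by the Koenigs coordinate $Z=\Phi(z)$. The invariance equation then becomes $\phi(Z/2)=\phi(Z)-\phi(Z)^2+g(Z)$ with $g=-\psi^2$. This equation acts diagonally on monomials, so it gives an explicit coefficient recursion with divisors $1-2^{-k}$. Convergence is then proved by a majorant comparison with Catalan-type numbers.

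\textbf{Your route.} You skip the linearization and solve $\Psi-\Psi\circ P=\Psi^2+z^2$ directly, by a contraction on $\{\Psi=z^2h:\ h\in H^\infty(\{|z|<r\})\}$ with norm $\sup|\Psi|/|z|^2$. The key point is that composition with $P$ has norm at most $(1/2+r)^2<1$ on this space. This uses only $|P(z)|\le(1/2+r)|z|<r$, and it is the operator form of the paper's non-resonance $1-2^{-k}\neq 0$ for $k\ge 2$. Your estimates check out:
\begin{itemize}
\item $\|L^{-1}\|\le 2$ once $r\le(\sqrt2-1)/2$;
\item the ball $\|\Psi\|\le 4$ is mapped into itself once $r\le 1/4$;
\item the Lipschitz constant is $16r^2<1$.
\end{itemize}
The Neumann series $\sum_n f\circ P^n$ converges uniformly on the disc, so the fixed point is holomorphic. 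The bound $|\Psi(z)|\le 4|z|^2$ gives tangency to the $z$-axis.

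\textbf{Comparison.} Your approach needs neither the Koenigs theorem nor the combinatorial majorant. It gives an explicit radius, an explicit quadratic bound, and uniqueness in that ball, and it adapts directly to any $P$ with an attracting (even superattracting) fixed point. The paper's approach gives the coefficients explicitly.

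\textbf{Two slips to clean up.} Neither affects the proof, since your fixed-point step does not use the formal computation.
\begin{itemize}
\item Your formal coefficient has the wrong sign. Comparing $z^2$ terms gives $c_2/4=c_2-1$, so $c_2=4/3$, not $-4/3$. This agrees with the leading term $L^{-1}(z^2)=\tfrac43 z^2$ of your own fixed-point equation.
\item Delete the abandoned sentence about iterating with $P^{-1}$.
\end{itemize}
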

\begin{proof}
Let $Z=\Phi(z),$ where \(\Phi\) is the Koenigs linearizing coordinate satisfying
$$
\Phi\!\left(\frac{z}{2}-z^2\right)=\frac{1}{2}\Phi(z),\qquad \Phi(0)=0,\qquad\Phi'(0)=1.
$$

\[
\begin{tikzpicture}[
>=Stealth,
node distance=1.4cm and 4.2cm,
every node/.style={font=\normalsize},
arrow/.style={->, thick}
]

\node (z) {$z$};
\node[right=of z] (fz) {$f=\dfrac z2-z^2$};
\node[below=of z] (Z) {$Z=\Phi(z)$};
\node[right=of Z] (LZ) {$\dfrac12 Z$};

\draw[arrow] (z) -- node[above] {$f$} (fz);
\draw[arrow] (z) -- node[left] {$\Phi$} (Z);
\draw[arrow] (fz) -- node[right] {$\Phi$} (LZ);
\draw[arrow] (Z) -- node[below] {$L$} (LZ);	
\end{tikzpicture}
\]

Then in the new coordinates $(Z,W)$, with
$$
Z=\Phi(z),\qquad W=w,
$$
we have
$$
z=\psi(Z)=\Phi^{-1}(Z).
$$
Thus, the transformed map becomes
$$
G(Z,W)=\left(\frac{Z}{2},\;W-W^2-\psi(Z)^2\right).
$$

The inverse change of coordinates $\psi(Z)$ satisfies
$$
\psi\!\left(\frac{Z}{2}\right)=\frac{1}{2}\psi(Z)-\psi(Z)^2.
$$
Therefore the transformed system is
$$
G(Z,W)=\left(\frac{Z}{2},\;W-W^2-Z^2-8Z^3-\frac{176}{3}Z^4-\cdots\right).
$$
Hence, in a small neighborhood of the origin, the map has the form
$$
G(Z,W)=\left(\frac{Z}{2},\;W-W^2+g(Z)\right),
$$
where $g(Z)=\sum_{k\geq 2} c_kZ^k$ is a convergent power series in a small disc. In particular, there exists a constant $C$ so that $|c_k|\leq C^k$ for all $k$.

The origin is a fixed point, and the Jacobin matrix at origin is
$$
DG(0,0)=
\begin{pmatrix}
1/2 & 0\\
0 & 1
\end{pmatrix}.
$$
Thus, the $Z$-direction is the strong stable direction, since its eigenvalue is $1/2$, while the $W$-direction is neutral/parabolic, since its eigenvalue is $1$.

We look for a local invariant curve tangent to the $Z$-axis. Write it as a graph of a holomorphic function $W=\phi(Z),$ with $\phi(0)=0$ and $\phi'(0)=0.$ Note that at this moment $\phi$ is a formal power series, with the coefficients defined inductively below. We will then show that this formal power series is actually convergent in a small disc.

The invariance condition means that if \((Z,\phi(Z))\) lies on the curve, then \(G(Z,\phi(Z))\) also lies on the same curve. Thus, we require
$$
G(Z,\phi(Z))=\left(\frac{Z}{2},\phi(Z)-\phi^2(Z)+g(Z)\right)=\left(\frac{Z}{2},\phi\left(\frac{Z}{2}\right)\right),
$$
i.e.,
$$
\phi\left(\frac{Z}{2}\right)=\phi(Z)-\phi^2(Z)+g(Z).
$$

Write $\phi(Z)=\sum_{k\ge 2} b_kZ^k$. Then, we require
\[\sum_{k\ge 2} \frac{b_k}{2^k} Z^{k}=\sum_{k\ge 2} b_k Z^{k}-\left(\sum_{k\ge 2} b_k Z^{k}\right)^2+\sum_{k\geq  2}c_kZ^k.\]

For each $k\geq 2$, we get
$$
b_k(1-1/2^k)=\sum_{i+j=k, i,j\geq 2}b_ib_j+c_k,
$$
i.e.,
$$
b_k=\frac{1}{1-1/2^k}\sum_{i+j=k, i,j\geq 2}b_ib_j+\frac{1}{1-1/2^k}c_k.
$$
Thus, we have
$$
|b_k|\leq 2\sum_{i+j=k, i,j\geq 1}|b_i||b_j|+2C^k.
$$
Hence,
$$
2|b_k|+2C^k \leq \sum_{i+j=k, i,j\geq 1}(2|b_i|)(2|b_j|)+4C^k,
$$
which implies that
$$
2|b_k|+2C^k\leq \sum_{i+j=k, i,j\geq 1}(2|b_i|+2C^i)(2|b_j|+2C^j).
$$

If we let $B_i=2|b_i|+2C^i$, then we get the inductive inequalities
$$B_k\leq \sum_{i+j=k, i,j \geq 1} B_i B_j.$$
Next, we replace the $B_i$ by larger numbers $B'_i$ so that
$$B'_k=\sum_{i+j=k, i,j}B'_iB'_j.$$
Note that $\{B'_k\}$ are Catalan-type numbers (see e.g. \cite{B:Catalan}).

The function $B'(t)=\sum_{k\geq 1}B'_k t^k$ satisfies a quadratic equation of the form
$$
B'(t)=B'_1t+B'(t)^2,
$$
with solutions $B'(t)=\frac{1\pm\sqrt{1-4B'_1t}}{2}$. Since $B'(0)=0$, we have $B'(t)=\frac{1-\sqrt{1-B'_1t}}{2}$. This is a convergent power series when $|t|<\frac{1}{B'_1}:=R.$ Therefore, the coefficients $B'_k$ satisfy an exponential bound $|B'_k| \le C_1^k$, for some constant $C_1>0.$

Since $|b_k|\le B'_k,$ we also get $|b_k|\le C_1^k.$ So $b_k$ grows at most exponentially. Therefore, the formal curve $W=\phi(Z)$ is actually convergent for $Z$ in a small disc around $Z=0.$ This is the stable curve of $G$ at the origin. It is invariant, tangent to the attracting $Z$-direction, and points on it converge to $(0,0)$ exponentially fast. Going back to the original coordinates, we get a convergent stable curve $w=\Psi(z)$ tangent to the $z$-axis in a small disc centered at $z=0$.
\end{proof}

Next, we connect to the analysis of the second family in section \ref{S:second} above.

We change coordinates so that the stable curve $w=\Psi(z)$ becomes the $z$-axis. We conjugate $F$ with the change of coordinates $(z,w)\rightarrow (z,\tilde{w})=(z, w-\Psi(z))$. Then, we get
$$\begin{aligned}
H(z,\tilde{w})&=(z/2-z^2, (\tilde{w}+\Psi(z))-(\tilde{w}+\Psi(z))^2-z^2-\Psi(z/2-z^2))\\
&=(z/2-z^2, \tilde{w}-\tilde{w}^2-2\tilde{w}\Psi(z)+\Psi(z)-\Psi(z)^2-z^2-\Psi(z/2-z^2)).
\end{aligned}$$
Since the curve $\tilde{w}=0$ is mapped into itself, the sum of the last four terms is zero. Therefore, we have proved the following.

\begin{lem}\label{L:tilde}
The map $F$ is conjugate to the map $H(z,\tilde{w})=(z/2-z^2, \tilde{w}-\tilde{w}^2-2\Phi(z)\tilde{w})$ near the origin.
\end{lem}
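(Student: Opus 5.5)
The plan is to take the holomorphic change of coordinates $\chi(z,w)=(z,\tilde w)=(z,w-\Psi(z))$. Here $\Psi$ is the function from the previous lemma, defined on a small disc $\{|z|<r\}$. I then compute $H:=\chi\circ F\circ\chi^{-1}$ directly and use the invariance of the graph $\{w=\Psi(z)\}$ to cancel the terms that do not involve $\tilde w$. Because $\Psi$ is holomorphic near $0$ with $\Psi(0)=\Psi'(0)=0$, $\chi$ is a biholomorphism of $\{|z|<r\}\times\mathbb{C}$ onto itself with inverse $(z,\tilde w)\mapsto(z,\tilde w+\Psi(z))$. It fixes the origin and is tangent to the identity there. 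So the conjugacy holds on any neighborhood of the origin small enough that $z$ and $P(z)=z/2-z^2$ both stay in the disc where $\Psi$ is defined.

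\textbf{Step 1: the invariance equation for $\Psi$ in the original coordinates.} The previous lemma gives $\phi$ with $\phi(Z/2)=\phi(Z)-\phi(Z)^2+g(Z)$, where $g(Z)=-\psi(Z)^2$, and it sets $\Psi:=\phi\circ\Phi$, with $\Phi$ the Koenigs coordinate. I substitute $Z=\Phi(z)$ and use $\Phi(P(z))=\tfrac12\Phi(z)$ and $\psi(\Phi(z))=z$. This should give
\[
\Psi\bigl(P(z)\bigr)=\Psi(z)-\Psi(z)^2-z^2 ,\qquad |z|<r .
\]
This says exactly that $F$ maps the graph of $\Psi$ into itself. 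I would make this translation explicit, because the previous lemma only states the invariance in the $(Z,W)$ coordinates.

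\textbf{Step 2: the conjugation.} For a point $(z,\tilde w)$, set $w=\tilde w+\Psi(z)$. The second coordinate of $H$ is then $Q(z,w)-\Psi(P(z))$. Expanding gives
\[
\tilde w-\tilde w^2-2\Psi(z)\tilde w+\bigl[\Psi(z)-\Psi(z)^2-z^2-\Psi(P(z))\bigr],
\]
which matches the computation displayed just before the statement. By Step 1 the bracket vanishes identically. Hence $H(z,\tilde w)=\bigl(z/2-z^2,\ \tilde w-\tilde w^2-2\Psi(z)\tilde w\bigr)$. The coefficient of $\tilde w$ in the statement, written $2\Phi(z)\tilde w$, should be read as $2\Psi(z)\tilde w$: the function appearing is the stable-curve function $\Psi$, not the Koenigs map. (Up to that identification, the formula is what the statement asserts.)

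\textbf{Main obstacle.} The only point needing care is Step 1. One must check that the formal invariance equation for $\phi$ corresponds to a genuine holomorphic identity for $\Psi$ on a fixed disc, and that $P$ maps that disc into itself so that both sides are defined. For $r$ small, $|P(z)|\le|z|(1/2+r)<|z|$, so the disc is forward invariant. The identity then follows from the convergence of $\phi$ proved in the previous lemma, by analytic continuation from a neighborhood of $0$. Everything else is the algebraic expansion above.
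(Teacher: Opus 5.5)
Your proposal is correct and follows the paper's argument. Conjugate by the shear $(z,w)\mapsto(z,w-\Psi(z))$, expand, and observe that the $\tilde w$-free terms $\Psi(z)-\Psi(z)^2-z^2-\Psi(z/2-z^2)$ cancel because the graph of $\Psi$ is $F$-invariant; your Step 1 only spells out the identity $\Psi(P(z))=\Psi(z)-\Psi(z)^2-z^2$ that the paper uses without comment, and your reading of $2\Phi(z)\tilde w$ as $2\Psi(z)\tilde w$ matches the paper's own displayed computation.
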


Note that this is sufficiently similar to the second family that the same analysis applies. In particular, a similar argument as in Lemma \ref{L:w=0} shows that the stable curve $\tilde{w}=0$ is contained in $\partial \Omega$.

\begin{thm}\label{T:C3}
Let $F(z,w)=(\lambda z-z^2, w-w^2-z^2)$, $0<\lambda<1$. Fix $\epsilon>0$ sufficiently small. Denote by $S$ the set of preimages of $(\epsilon,\epsilon)$ under $F$. For any connected component $\Omega_\alpha$ of $\Omega$, set $S_\alpha:=S\cap \Omega_\alpha$. Then, for an arbitrary constant $C>0$, there exists a point $p_\alpha\in \Omega_\alpha$ such that for any $q\in S_\alpha$, the Kobayashi distance $d_{\Omega_\alpha}(p_\alpha, q)\ge C$.
\end{thm}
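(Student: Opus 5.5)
We mirror the proof of Theorem \ref{T:C2}, transported through the local conjugacy of Lemma \ref{L:tilde}. There are three steps: a dynamical one, a Kobayashi-metric one, and a pull-back to the other components.

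\textbf{Step 1: the dynamical structure of the immediate basin.} Let $\Omega^0$ be the immediate basin, i.e.\ the component containing the points of the cone where orbits converge to the origin tangentially to the positive $\tilde w$-direction. We want three facts.
\begin{enumerate}
\item \emph{Analogue of Corollary \ref{C:U}.} The stable curve $\Gamma=\{w=\Psi(z)\}$ lies in $\partial\Omega$ near the origin. To see this, apply the trapezoid argument of Lemma \ref{L:R} to $H(z,\tilde w)=(z/2-z^2,\tilde w-\tilde w^2-2\Psi(z)\tilde w)$. Since $\Psi(z)=O(z^2)$, the perturbation term $2\Psi(z)\tilde w$ is even smaller than the term $2z\tilde w$ of the second family, so the estimates of Lemmas \ref{L:R} and \ref{L:w=0} go through verbatim. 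This produces points arbitrarily close to $\Gamma$ on both sides: some escape and some lie in $\Omega$.
\item \emph{Boundedness.} $\Omega^0$ is bounded, by the same elementary estimates as in Section \ref{S:second}: $U\subset\{|z|<3/2\}$, and $|w|$ large forces $|Q(z,w)|\ge|w|$.
\item \emph{Surjective projection.} The projection $\pi_1:\Omega^0\to U$ is surjective, proved as in Lemma \ref{L:pi1} by pulling back a small disc attached to a segment in the cone $T_{\epsilon,\delta}$, now taken in $(z,\tilde w)$ coordinates.
\end{enumerate}

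\textbf{Step 2: large Kobayashi distance in $\Omega^0$.} Split the preimages $q_k=(z^k,w^k)\in F^{-k}(\epsilon,\epsilon)\cap\Omega^0$ into large $k$ and small $k$.
\begin{itemize}
\item For $k\ge N$: since $z^k=P^{-k}(\epsilon)\to\partial U$, we have $d_U(0,z^k)\ge C$. Hence $d_{\Omega^0}(p,q_k)\ge C$ for every $p\in\Omega^0$ with first coordinate $0$, because projection to $U$ decreases the Kobayashi distance.
\item For the finitely many $k<N$: we need a second holomorphic map out of $\Omega^0$ that separates a chosen point from these $q_k$. Here we use a global function instead of $\pi_2$. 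The map $F^{m}$ sends $\Omega^0$ into itself. For $p=(0,w_0)$ with $w_0>0$ small, the point $p$ lies close to $\Gamma\subset\partial\Omega^0$, while $q_0,\dots,q_{N-1}$ is a fixed finite set of interior points.
\end{itemize}

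\textbf{The main obstacle: making "close to the boundary" quantitative.} In the second family, $\tilde w$ was a globally defined function on $\Omega^0$ omitting $0$ (because $\{w=0\}\cap\Omega=\emptyset$), which gave a map into $\mathbb{D}^*_R$. Here $w-\Psi(z)$ is defined only near the origin, so this does not transfer directly.

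\emph{First attempt.} Check whether the graph $\{w=\Psi(z)\}$ extends by pullback to the whole stable set of the origin over $U$, and whether that stable set is disjoint from $\Omega^0$. The second part holds, since orbits in $\Omega$ converge tangentially to the $\tilde w$-direction while stable points do not. If the extension works, the function $w-\Psi(z)$ (with $\Psi$ extended through the Koenigs coordinate $\Phi$, which is global on $U$) omits $0$ on $\Omega^0$. In fact $\Psi$ can be defined on all of $U$ by $\phi\circ\Phi$ only if $\phi$ is entire, which is not automatic.

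\emph{Fallback.} Work locally and use that $F^n(p)\to 0$ inside the cone, while distance is nonincreasing under $F^n$. Choose $p$ so that $F^{n}(p)$, for a large $n$, lies very near $\Gamma$ within a small bidisc $B$. Then compare $d_{\Omega^0}$ with the localization estimate of the Kobayashi metric near a boundary point. The relevant point is "peak-like" behavior: the function $1/\tilde w$ is large there, and the domain $\Omega^0\cap B$ omits $\{\tilde w=0\}$. With this localization, $d_{\Omega^0}(p,q_k)\to\infty$ as $w_0\to0$ for each fixed $k<N$.

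\textbf{Step 3: other components.} Finally, for any other component $\Omega_\alpha$ with $F^{n_\alpha}(\Omega_\alpha)\subset\Omega^0$, pull back the chosen point exactly as in the last paragraph of the proof of Theorem \ref{T:C2}. Since $F^{n_\alpha}$ maps $\Omega_\alpha$ holomorphically into $\Omega^0$, it decreases the Kobayashi distance, so the lower bound $C$ transfers to $\Omega_\alpha$.
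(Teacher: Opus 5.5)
Steps 1, 3 and the large-$k$ half of Step 2 are exactly the argument of Theorem \ref{T:C2}, which is all the paper's proof says (Lemma \ref{L:tilde} is claimed to make that proof go through). You rightly notice that $\pi_2$ does not transfer, because $\tilde{w}=w-\Psi(z)$ exists only over a small disc $\{|z|<r\}$ while $\pi_1(\Omega^0)=U$. But your fallback does not close the step that matters, namely the finitely many $q_k$ with $k<N$, and as stated it cannot.

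\textbf{Why the fallback fails.} Localizing at the single boundary point $(0,0)$ cannot give a distance bound, because $\partial\Omega^0$ contains the complex disc $\Gamma$ through that point. For small $r$ and small $t>0$, the disc $\{\tilde{w}=t,\ |z|<r\}$ lies in $\Omega^0$: the term $2\Psi(z)\tilde{w}$ only rotates $\tilde{w}$ by a total angle $O(r^2)$ along orbits. Hence $d_{\Omega^0}\big((0,t),(z_1,\Psi(z_1)+t)\big)\le d_{\mathbb{D}_r}(0,z_1)$ uniformly in $t$. The maximum principle on these discs rules out any peak function at the origin. A path from $p=(0,w_0)$ can therefore leave any small bidisc $B$ at bounded cost while keeping $|\tilde{w}|$ tiny, by sliding along $\Gamma$. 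Royden's lemma does localize the infinitesimal metric to $B$, since $\Omega^0$ is bounded, but once the path exits $B$ you have no estimate. Applying $F^n$ first does not help, since $F^n(q_k)$ also lies in $B$ and the same escape is available. So the claim $d_{\Omega^0}(p,q_k)\to\infty$ for $k<N$ is unproved.

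\textbf{What is missing.} You need control of the transverse coordinate along the whole stable set $\bigcup_n F^{-n}(\Gamma)$ over an exhaustion of $U$. Your first attempt was closer, except that no global graph is needed; pullbacks suffice.

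\textbf{A way to supply it.} Choose $r$ small with $P(\mathbb{D}_{r/2})\subset\mathbb{D}_{r/2}$, take $\epsilon<r/2$, and set $E_n=P^{-n}(\mathbb{D}_{r/2})$. These sets increase, are relatively compact in $U$, and exhaust $U$.

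On $\Omega^0\cap\pi_1^{-1}(P^{-n}(\mathbb{D}_r))$ the function $h_n=\tilde{w}\circ F^n$ is defined and bounded by some $R$ independent of $n$, since $\Omega$ is bounded. It is also zero-free: its zeros are mapped into $\Gamma$, while $\Gamma\cap\Omega=\emptyset$ and $F(\Omega)\subset\Omega$.

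Let $G$ be a component of that set and $x\in G\cap\pi_1^{-1}(E_n)$. A path in $\Omega^0$ from $x$ to a point outside $G$ must leave $\pi_1^{-1}(P^{-n}(\mathbb{D}_r))$, so its image under $P^n\circ\pi_1$ joins $\mathbb{D}_{r/2}$ to $U\setminus\mathbb{D}_r$. Hence $d_{\Omega^0}(x,\Omega^0\setminus G)\ge c_0:=d_U(\overline{\mathbb{D}_{r/2}},U\setminus\mathbb{D}_r)>0$. Writing $K$ for the Kobayashi--Royden metric, Royden's localization lemma then gives $K_{\Omega^0}(x;v)\ge c\,K_{\mathbb{D}^*_R}(h_n(x);dh_n(x)v)$ on $\pi_1^{-1}(E_n)$, with $c>0$ depending only on $c_0$.

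Now $p$ and all $q_k$ with $k<N$ project into $E_N$. A path from $p$ to $q_k$ falls into one of two cases.
\begin{enumerate}
\item Its projection leaves $E_n$. Then its length is at least $d_U(\overline{E_N},U\setminus E_n)$, which is $\ge C$ for $n$ large.
\item It stays in $\pi_1^{-1}(E_n)$. Then its length is at least $c\,d_{\mathbb{D}^*_R}(Q_0^n(w_0),h_n(q_k))$. With $n$ fixed, $h_n(p)=Q_0^n(w_0)\to 0$ and $h_n(q_k)\neq 0$, so this is $\ge C$ once $w_0$ is small.
\end{enumerate}
Choosing $N$, then $n$, then $w_0$ completes the step.
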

\begin{proof}
By Lemma \ref{L:tilde}, the proof is similar to that of Theorem \ref{T:C2}. We omit the details.
\end{proof}

\section{The fourth family}

In this section, we consider the following family of skew products:
$$F(z,w)=(P(z),Q(z,w))=(\lambda z-z^2, w-w^2+azw^3),\ \ \ \ \ \ 0<\lambda<1.$$
For simplicity, suppose that $\lambda=1/2$.

For the fourth family, the difficulty is that the Fatou components can be unbounded.

Denote by $\Omega$ the attracting basin of $F$ at $(0,0)$. Let $U$ be the one-dimensional attracting basin of $z=0$ for $P(z)$ in the $z$-axis. Denote by $V$ the one-dimensional parabolic basin of $Q_0(w):=w-w^2$ at $w=0$ in the $w$-axis.

For $\delta>0$ small, set $V_\delta=\{w=u+iv,\ 0<u<\delta,\ |v|<u/100\}$ and $W_\delta=\{(z,w);\ z\in U,\ w\in V_\delta\}$.

We use the notation $A=\pm B$ to mean that $|A|\leq B$.

\begin{lem}\label{L:delta}
$F(W_\delta)\subset W_{\delta-\delta^2/2}$ whenever $\delta$ is small enough (which depends on $a$).
\end{lem}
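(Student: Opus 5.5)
The plan is a direct computation in real coordinates $w=u+iv$. We use only two facts: $U$ is forward invariant under $P$, and $U\subset\{|z|<3/2\}$, as shown at the start of Section \ref{S:second}; the first coordinate of $F$ is the same $P(z)=z/2-z^2$ here. So the $z$-part of the inclusion is automatic. It remains to show that for every $z\in U$ and $w\in V_\delta$, the point $w_1:=Q(z,w)=w-w^2+azw^3$ lies in $V_{\delta-\delta^2/2}$.

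Write $w_1=u_1+iv_1$. On $V_\delta$ we have $|v|<u/100$, so $|w|\le u(1+10^{-4})^{1/2}$, and hence
\[
|azw^3|\le \tfrac32|a|\,|w|^3\le 2|a|u^3 .
\]
Expanding $w-w^2=(u-u^2+v^2)+i(v-2uv)$ and using the notation $A=\pm B$, I get
\[
u_1=u-u^2+v^2\pm 2|a|u^3,\qquad v_1=v(1-2u)\pm 2|a|u^3 .
\]

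Three inequalities then need checking, each for $\delta$ small depending on $a$.

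\emph{Positivity.} $u_1\ge u-u^2-2|a|u^3>0$.

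\emph{Upper bound.} Since $v^2\le u^2/10^4$, we have $u_1\le u-0.999u^2+2|a|u^3\le u-0.99u^2$ once $2|a|u\le 0.009$. The function $u\mapsto u-0.99u^2$ is increasing on $(0,1/2)$, so $u_1<\delta-0.99\delta^2<\delta-\delta^2/2$.

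\emph{The sector condition.} Using $|v|<u/100$,
\[
|v_1|<\frac{u}{100}(1-2u)+2|a|u^3,\qquad \frac{u_1}{100}\ge \frac{u-u^2-2|a|u^3}{100}.
\]
Subtracting gives
\[
\frac{u_1}{100}-|v_1|>\frac{u^2}{100}-2.02\,|a|u^3,
\]
which is positive as soon as $u<\delta<1/(202|a|+1)$.

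The only delicate step is this sector condition. The gain of order $u^2/100$ comes from the factor $(1-2u)$, which contracts $v$ slightly faster than $u$ decreases, and it must beat the cubic perturbation $azw^3$. That perturbation is $O(u^3)$ uniformly in $z\in U$ because $U$ is bounded. This uniformity is the reason the smallness of $\delta$ depends on $a$ but not on $z$.
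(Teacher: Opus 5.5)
Your proposal is correct and follows the paper's proof essentially step for step: the same expansion $u_1=u-u^2+v^2\pm 2Lu^3$, $v_1=v(1-2u)\pm 2Lu^3$, followed by the same three checks (positivity, the upper bound via monotonicity of a quadratic in $u$, and the sector condition reducing to an inequality of the form $202Lu^3<u^2$). The paper takes an unspecified constant $L$ with $|az|<L$ on $U$, whereas you make it explicit as $L=|a|$ using $U\subset\{|z|<3/2\}$, and your upper bound for $u_1$ correctly carries the $+2|a|u^3$ term where the paper's displayed inequality has a sign slip.
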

\begin{proof}
Let $(z,w)\in U\times V_\delta.$ Pick a constant $L$ so that $|az|<L$ on $U.$ Write $w'=u'+iv'=w-w^2+azw^3.$ We then get $w'=w-w^2\pm L|w|^3$ where the last term is an upper bound for $azw^3$. Thus,
$$
u'=u-u^2+v^2\pm 2Lu^3,\ \ \ \ \ \ v'=v-2uv\pm 2Lu^3.
$$
We show first that $0<u'<\delta-\delta^2/2.$
$$
u'\geq u-u^2-2Lu^3>0,\ \ \ \ \ \ u'<u-u^2+u^2/10^4-2Lu^3<u-u^2/2<\delta-\delta^2/2.
$$
The last inequality uses that the function $u-u^2/2$ is increasing as its derivative $1-u>0.$ Next we show that $|v'|<u'/100:$
$$
v'=v(1-2u)\pm 2Lu^3,\ \ \ \ \ \ |v'|\leq |v|(1-2u)+2Lu^3<\frac{u}{100}(1-2u)+2Lu^3.
$$
We show that the right side is at most $u'/100$. This follows if we can show that
$$
u-2u^2+200Lu^3<u'.
$$
This again follows if: 
$$u-2u^2+200Lu^3< u-u^2-2Lu^3,$$
i.e.,
$$
202Lu^3<u^2,
$$
which works for small $\delta.$
\end{proof}

\begin{cor}
$W_\delta\subset \Omega$ for small $\delta>0.$
\end{cor}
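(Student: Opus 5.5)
The plan is to iterate Lemma \ref{L:delta} and show that every orbit starting in $W_\delta$ converges to $(0,0)$. Given $(z_0,w_0)\in W_\delta$, write $(z_n,w_n)=F^n(z_0,w_0)$.

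For the first coordinate, $z_0\in U$, so $z_n=P^n(z_0)\to 0$ because $U$ is the attracting basin of $P$ at $0$. Moreover $U$ is forward invariant, so $z_n\in U$ for all $n$.

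For the second coordinate, the content of the proof of Lemma \ref{L:delta} is slightly stronger than the stated inclusion. Write $w_n=u_n+iv_n$. The proof shows, for every point with $u<\delta$:
\begin{itemize}
\item $0<u_{n+1}\le u_n-u_n^2/2$;
\item $|v_{n+1}|<u_{n+1}/100$;
\item $z_{n+1}\in U$.
\end{itemize}
The first estimate is pointwise: it was derived for arbitrary $u\in(0,\delta)$, not only at the endpoint. Hence $(z_n,w_n)\in W_\delta$ for all $n$.

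From the first estimate, $u_n$ is positive and strictly decreasing, so it converges to a limit $u^*\ge 0$. Passing to the limit gives $u^*\le u^*-(u^*)^2/2$, which forces $u^*=0$. One can be more quantitative: using $1/u_{n+1}\ge 1/u_n+1/2$ we get $u_n\lesssim 2/n$. Since $|v_n|<u_n/100$, it follows that $w_n\to 0$.

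Therefore $(z_n,w_n)\to(0,0)$, i.e. $(z_0,w_0)\in\Omega$, which proves $W_\delta\subset\Omega$.

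The only point needing care is that the inclusion $F(W_\delta)\subset W_{\delta-\delta^2/2}$, read naively, only shrinks $\delta$ by a summable-looking amount at each step. Iterating the statement itself, with $\delta_{n+1}=\delta_n-\delta_n^2/2$, still gives $\delta_n\to0$, since $\delta_n$ is decreasing, bounded below by $0$, and its limit $\delta^*$ satisfies $\delta^*=\delta^*-(\delta^*)^2/2$. So one can simply apply Lemma \ref{L:delta} repeatedly with $\delta_n\le\delta$, which is allowed because the smallness condition on $\delta$ is monotone. This yields $W_\delta\subset\bigcap$ of preimages of $W_{\delta_n}$, and hence $|w_n|\le \delta_n(1+1/100)\to 0$ directly.

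I expect no real obstacle beyond checking this monotonicity of the hypothesis in $\delta$. That check holds because the constant $L$ depends only on $U$ and $a$, not on $\delta$.
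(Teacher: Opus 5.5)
Your proposal is correct and is the argument the paper leaves implicit when it calls this an immediate corollary: iterate Lemma \ref{L:delta} to get $F^n(W_\delta)\subset W_{\delta_n}$ with $\delta_n\to 0$, and use $P^n\to 0$ on $U$. One point is worth stating explicitly: the paper's $\Omega$ is really the open basin (the points of $U\times\{0\}$ have orbits converging to $(0,0)$, yet lie in $\partial\Omega$), so besides pointwise convergence you should note that $W_\delta$ is open and that the convergence is locally uniform there; your uniform bound $|w_n|\le 1.01\,\delta_n$, together with locally uniform convergence of $P^n$ on $U$, already gives both.
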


\begin{lem}
For any $(z,w)\in \Omega$, its forward orbit enters $W_\delta$, $\delta$ small.
\end{lem}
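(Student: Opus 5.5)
Since $(z_n,w_n)\to(0,0)$ for $(z,w)\in\Omega$, the first coordinate is harmless: $z_n\in U$ for all $n$. The whole task is therefore to show that $w_n$ eventually lies in the sector $V_\delta$. We argue as in the analogous lemma for the second family and pass to the coordinate $W=1/w$, in which the parabolic direction becomes a translation.

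\emph{Step 1: the recursion for $W_n$.} From $w'=w-w^2+azw^3=w(1-w+azw^2)$ we get
$$W_{n+1}=\frac{W_n}{1-1/W_n+az_n/W_n^2}=W_n+1+O\!\left(\frac{1}{|W_n|}\right),$$
where the constant in the $O$ term is uniform, because $|az_n|<L$ on $U$ and $|W_n|\to\infty$. Since $z_n\to 0$ geometrically, we can write the error term more precisely as $1/W_n+O(|z_n|/|W_n|)+O(1/|W_n|^2)$.

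\emph{Step 2: $W_n$ escapes to the right.} Fix a large $N$ such that $|W_n|>M$ for all $n\ge N$. Summing the recursion gives
$$W_n=W_N+(n-N)+\sum_{j=N}^{n-1}O(1/|W_j|).$$
Take $M$ large enough that each error term is at most $1/10$ in modulus. Then $\Re W_{n+1}\ge \Re W_n+9/10$, and the increments satisfy $|W_{n+1}-W_n-1|\le 1/10$. Hence $\Re W_n\ge \Re W_N+\tfrac{9}{10}(n-N)$, so $\Re W_n$ grows linearly. For the imaginary part, $|\Im W_n|\le|\Im W_N|+\tfrac{1}{10}(n-N)$, which is too crude for our purposes: it only gives $|\Im W_n|/\Re W_n\lesssim 1/9$, while $V_\delta$ requires ratio below $1/100$ in the $w$-variable.

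\emph{Step 3: refining the imaginary part (the main obstacle).} Once $\Re W_n\gtrsim n$, we have $|W_j|\gtrsim j$. The error term then becomes $1/W_j+O(1/j^2)+O(\lambda'^j)$, so
$$\Im W_n=\Im W_{N'}+O(\log n)+O(1).$$
Here $\sum|1/W_j|$ contributes at most a logarithm, and the remaining terms are summable. It follows that $|\Im W_n|/\Re W_n=O(\log n/n)\to 0$. Since $w=1/W$ gives $\arg w=-\arg W$, the ratio $|v_n|/u_n$ also tends to $0$, and $u_n\sim 1/n\to 0$. So for large $n$ we have $0<u_n<\delta$ and $|v_n|<u_n/100$. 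Together with $z_n\in U$, this puts $(z_n,w_n)$ in $W_\delta$. The delicate point is that the initial $O(1/|W|)$ bound must be made uniform before we know $\Re W_n>0$. We handle this by choosing $M$ large relative to $L$, which makes all error terms small in absolute value, so that the real part increases monotonically from the start.

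If the logarithmic bound causes trouble, an alternative is to compare with the exact Fatou coordinate of $w-w^2$. Alternatively, we can first get the orbit into a larger sector $|v|<u$ and then iterate the estimate from Lemma~\ref{L:delta}, where the ratio $|v'|/u'$ contracts by a factor of roughly $1-u$ per step. Since $\sum u_n=\infty$, this drives the ratio below $1/100$.
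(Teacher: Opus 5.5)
Your proof is correct and follows essentially the paper's route: in $W=1/w$ the map is a unit translation up to an $O(1/|W|)$ error, so $\Re W_n$ grows linearly while $\Im W_n$ grows more slowly, which forces $|\Im w_n|/\Re w_n\to 0$ and $w_n\to 0$. The paper skips your Step 3 by waiting until the per-step error is below an arbitrary $\eta$, so your Step 2 bound with $1/10$ replaced by $\eta$ already gives ratio $<4\eta<1/100$; it also states explicitly that $w_n\neq 0$ for all $n$, which you use tacitly when writing $W_n=1/w_n$.
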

\begin{proof}
Pick any $(z_0,w_0)\in \Omega.$ To show that the orbit enters any given $W_\delta$, it suffices to replace $(z_0,w_0)$ by a high iterate. Then we can assume that $(z_0,w_0)$ and all its forward orbits $(z_n,w_n)$ are close to the origin. We also must have that all $w_n$ are nonzero.

We now analyze the dynamics by using the coordinates $(Z,W)=(z,1/w)$. The function $Q(z,w)$ will take the form
$$G(Z,W)=\frac{1}{1/W-1/W^2+aZ/W^3}=W+1\pm \left[\frac{2|aZ|}{|W|}+\frac{3}{|W^2|}\right].$$

For any $\eta>0$, there exists $n_0$ such that $\|F^n(z,w)\|<\eta$ for all $n\geq n_0.$ Then, for all $n\ge n_0$, we have $|W_n|>1/\eta$, $\Re (W_{n+1}>\Re W_{n}+1/2$ and $|\Im W_{n+1}|<|\Im  W_n|+\eta.$ It follows that for all large enough $m$, we have $|\Im W_m|< (4\eta)\Re W_m.$ This means that $|\Im w_m|<4\eta \Re w_m.$ For $\eta $ small enough, we see that the orbit enters the tube $W_\delta.$
\end{proof}

For $0<\delta<\delta_0$ small enough, set $X_\delta=U\times T_\delta$, where $T_\delta=\{w=u+iv;\ -2\delta<u<-\delta,\ |v|<\delta/100\}$.

\begin{lem}\label{L:X}
$F(X_\delta)\supset X_{\delta+.99 \delta^2}$.
\end{lem}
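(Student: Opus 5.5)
The plan is to reduce the statement to a one-variable covering statement on each fiber, and then prove that by a boundary (argument-principle) argument, as in Lemma \ref{L:R}.

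\emph{Step 1 (reduction to fibers).} Since $U$ is the full basin of $z=0$ for the polynomial $P$, it is completely invariant and $P(U)=U$. Hence every $z'\in U$ has a preimage $z\in U$. Since $F(z,w)=(P(z),Q_z(w))$, it suffices to show that for every $z\in U$,
$$Q_z(T_\delta)\supset T_{\delta'},\qquad \delta':=\delta+.99\,\delta^2.$$
As in the proof of Lemma \ref{L:delta}, fix $L$ with $|az|<L$ on $U$. Then on $T_\delta$ we have $Q_z(w)=Q_0(w)\pm L|w|^3=Q_0(w)\pm 9L\delta^3$. The perturbation is therefore uniformly $O(\delta^3)$, an order smaller than the $O(\delta^2)$ margins below once $\delta<\delta_0$.

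\emph{Step 2 (images of the four edges).} Write $Q_0(u+iv)=(u-u^2+v^2)+i(v-2uv)$, with $U'$ and $V'$ denoting the real and imaginary parts of the image.
\begin{itemize}
\item On the left edge $u=-2\delta$: $U'=-2\delta-4\delta^2+O(\delta^2/10^4)+O(\delta^3)$, which is strictly less than $-2\delta'=-2\delta-1.98\delta^2$.
\item On the horizontal edges $v=\pm\delta/100$: $V'=\pm\frac{\delta}{100}(1-2u)+O(\delta^3)$. Since $u<0$, we get $|V'|\ge \frac{\delta}{100}(1+2\delta)>\delta'/100$.
\item On the right edge $u=-\delta$: $U'=-\delta-\delta^2+v^2\pm 9L\delta^3$ with $v^2\le \delta^2/10^4$.
\end{itemize}

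\emph{Step 3 (degree argument).} Pick a base point in $T_{\delta'}$, for instance $Q_z(-3\delta/2)$, and check directly that it lies in $T_{\delta'}$. For $\delta$ small, $Q_z$ is injective on a neighborhood of $\overline{T_\delta}$, since $Q_z'=1+O(\delta)$. So $Q_z(\partial T_\delta)$ is a Jordan curve bounding $Q_z(T_\delta)$. Any point of $T_{\delta'}$ that is separated from this curve is then covered, by the argument principle, exactly as in Lemma \ref{L:R}.

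\emph{Main obstacle: the right edge.} The separation in Step 3 needs the image of the right edge to lie at or to the right of the line $U'=-\delta'=-\delta-.99\delta^2$. Step 2 instead gives $U'\le -\delta-.9999\,\delta^2+9L\delta^3$, which lies slightly to the \emph{left} of $-\delta'$, by about $.0099\,\delta^2$.

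Because $Q_z$ is injective near $\overline{T_\delta}$, its image is exactly the region bounded by $Q_z(\partial T_\delta)$. Consequently, the thin strip $-\delta-\delta^2\lesssim u<-\delta'$ of $T_{\delta'}$ near its right edge appears not to be covered. The first thing I would do is recheck this edge carefully, including the sign conventions and the exact definition of $T_\delta$, to see whether the stated constant $.99$ survives.

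If the computation stands, the argument above establishes the inclusion for every $\delta''\le\delta+.9999\,\delta^2-O(\delta^3)$ at the left/right side with the constant adjusted. This is exactly what is needed later, where the lemma is iterated with $\delta_n\to$ a fixed size. The three other edges work with the stated $.99$.
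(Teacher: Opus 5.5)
Your computation on the right edge is correct. It exposes an error in the paper, not in your sign conventions. The paper's proof asserts $u'=-\delta-\delta^2+\delta^2/10^4\pm2L\delta^3>-\delta-.99\delta^2$, but that expression is at most $-\delta-.9999\,\delta^2+2L\delta^3<-\delta-.99\,\delta^2$.

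In fact the lemma with the constant $.99$ is false. Take $w'=-\delta-.995\,\delta^2$, so that $(0,w')\in X_{\delta+.99\delta^2}$. We have $P^{-1}(0)=\{0,1/2\}$, and each $Q_z$ is injective on $\{|w|<3\delta\}\supset T_\delta$. For each such $z$, the only candidate preimage of $w'$ there is $w=-\delta+.005\,\delta^2+O(\delta^3)$, whose real part exceeds $-\delta$. Hence $(0,w')\notin F(X_\delta)$. Your edge estimates plus the injectivity/degree argument with base point $-3\delta/2$ (which is also the paper's route) cannot close with $.99$, and you were right not to claim it.

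The genuine gap is your proposed repair, which goes in the wrong direction. For $T_{\delta''}\subset Q_z(T_\delta)$, the right side $u=-\delta''$ of $T_{\delta''}$ must lie to the left of the image of the right edge. That image is leftmost at $v=0$, where $u'=-\delta-\delta^2\pm9L\delta^3$. So you need $\delta''\ge\delta+\delta^2+9L\delta^3$, not $\delta''\le\delta+.9999\,\delta^2$. Two things went wrong:
\begin{itemize}
\item The value $.9999$ comes from the maximum $v^2=\delta^2/10^4$, whereas the binding point is $v=0$.
\item Shrinking $\delta''$ moves $-\delta''$ to the right and enlarges the uncovered strip; for instance $T_\delta\not\subset Q_0(T_\delta)$.
\end{itemize}

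The correct statement is $F(X_\delta)\supset X_{\delta+c\delta^2}$ for any fixed $c\in(1,2)$, e.g.\ $c=3/2$ as in Lemma \ref{L:R}. Each edge imposes one condition, up to an $O(\delta^3)$ error absorbed for $\delta<\delta_0$:
\begin{itemize}
\item the right edge needs $c>1$;
\item the left edge needs $2c<3.9999$;
\item the horizontal edges need $c<2$.
\end{itemize}
Your Step 3 then goes through verbatim, since $Q_z(-3\delta/2)=-\tfrac32\delta-\tfrac94\delta^2+O(\delta^3)$ lies in $T_{\delta+c\delta^2}$. The later iteration only needs $\delta_{n+1}=\delta_n+c\,\delta_n^2$ with some $c>0$ to reach a fixed size. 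So this corrected version is what the proof of $U\subset\partial\Omega$ should use.
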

\begin{proof}
We prove first that if $w=u+iv, u=-\delta, |v|<\delta/100$ then $u'>-\delta-.99 \delta^2$.
$$
u'=-\delta-\delta^2+\delta^2/10^4\pm2L\delta^3>-\delta-.99\delta^2.
$$

Next, we prove that if $u=-2\delta, |v|<\delta/100$ then $u'<-\delta-1.98 \delta^2$.
$$
u'=-2\delta-4\delta^2+\delta^2/10^4\pm 8L\delta^3<-2\delta-1.98 \delta^2.
$$

Next, we prove that if $-2\delta<u<-\delta, v=\delta/100$ then $v'>(\delta+.99 \delta^2)/100$ (and similar for $v=-\delta/100$).
$$\begin{aligned}
v' & =v(1-2u)\pm 2Lu^3=(\delta/100)(1-2u)\pm 2Lu^3\\
& >\delta/100+\delta^2/50-2Lu^3>(\delta+.99 \delta^2)/100.
\end{aligned}$$

Finally, one easily checks that the midpoint $w=-3/2\delta$ is mapped into $X_{\delta+.99 \delta^2}$. Thus, by the open mapping theorem, the lemma follows.
\end{proof}

\begin{lem}
$U\subset \partial \Omega$.
\end{lem}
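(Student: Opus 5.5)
Here $U\subset\partial\Omega$ is read as $U\times\{w=0\}\subset\partial\Omega$. The plan is to copy the argument of Lemma \ref{L:w=0} and Corollary \ref{C:U}. Fix $z\in U$ and $\epsilon>0$. We show there are $w',w''$ with $0<|w'|,|w''|<\epsilon$ such that $(z,w'')\in\Omega$ and the orbit of $(z,w')$ does not converge to $(0,0)$. This shows $(z,0)$ is a limit of points of $\Omega$. If moreover $(z,0)\notin\Omega$, then $(z,0)\in\partial\Omega$. Note that $Q(z,0)=0$, so $\{w=0\}$ is invariant and the orbit of $(z,0)$ converges to $(0,0)$. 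Hence the fibered statement should be understood with $\Omega$ a connected component (e.g.\ the immediate basin, as in Theorem \ref{T:C2}), or with the line $\{w=0\}$ removed. In that reading, the content of the lemma is that every neighborhood of a point of $U\times\{0\}$ meets both $\Omega$ and its complement.

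The point $w''$ is the easy half. By the Corollary, $W_\delta\subset\Omega$, and $W_\delta=U\times V_\delta$ already lies over all of $U$. So for any $z\in U$ we take $w''\in V_\delta$ with $|w''|<\min(\delta,\epsilon)$; then $(z,w'')\in W_\delta\subset\Omega$. No preimage argument is needed here, unlike the second family, since $W_\delta$ is a product over the whole of $U$.

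For $w'$, use Lemma \ref{L:X}. Put $\delta_0=\delta<\epsilon/3$ and $\delta_{n+1}=\delta_n+0.99\delta_n^2$, and write $z_n=P^n(z)\in U$. Lemma \ref{L:X} is stated in product form, but its proof is fiberwise: for each fixed $\zeta\in U$ the three boundary estimates and the open-mapping step give $Q_\zeta(T_{\delta})\supset T_{\delta+.99\delta^2}$. The relevant error bound is $\pm 2L|w|^3$ with $L$ uniform over $U$. So $K_n:=\{w:\ Q_{z_{k-1}}\circ\cdots\circ Q_{z_0}(w)\in \overline{T_{\delta_k}},\ k\le n\}$ is a decreasing sequence of nonempty compact sets, and their intersection contains some $w'$. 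Since $T_{\delta_0}$ lies in the disc of radius $\sim 2\delta$, we get $|w'|<\epsilon$. Its orbit satisfies $\Re w_n\le -\delta_n$, and $\delta_n$ is increasing, so $w_n$ never tends to $0$ and $(z,w')\notin\Omega$. Since $\delta_n$ grows, the lemma applies only while $\delta_n<\delta_0$ (the threshold from Lemma \ref{L:X}). After that we just need $w_n$ to stay away from $0$. For this, once $\Re w_n\le -\delta_0/2$, the real part keeps decreasing, because $u'\le u-u^2+v^2+2L|w|^3$ with $|v|$ small relative to $|u|$. Alternatively, one stops the nested construction at the first $n$ with $\delta_n\ge\delta_0/2$, and then argues that a point with $\Re w\le-\delta_0/2$ and small $|\Im w|$ is outside $\Omega$. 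This uses the earlier lemma: an orbit in $\Omega$ eventually enters $W_\delta$, where $\Re w>0$ and $\Re w$ is small.

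The main obstacle is exactly this last step. Lemma \ref{L:X} only controls a bounded range of $\delta$, and the family may have unbounded Fatou components, so escape to $\infty$ is not automatic as it was in the second family. I would first try a direct monotonicity argument in the coordinate $W=1/w$. There $G(Z,W)=W+1+O(1/|W|)$, so orbits that converge to $0$ must have $\Re W_n\to+\infty$ in a sector. Meanwhile the region $\{\Re w<0,\ |\Im w|<|\Re w|/100\}\cap\{|w|<\delta_0\}$ corresponds to $\Re W\ll 0$. The orbit of $w'$ has to stay in the half plane $\Re w<0$ to remain outside $\Omega$. Showing this for all $n$, rather than only while $\delta_n<\delta_0$, is where the care goes.
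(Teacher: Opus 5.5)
Your $w''$ half is fine and agrees with the paper, which also gets $U\times\{0\}\subset\overline{\Omega}$ from $W_\delta\subset\Omega$. The gap is in the other half. You never show that the constructed point $(z,w')$ lies outside $\Omega$, and neither repair you suggest works.

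Lemma \ref{L:X} can only be iterated while $\delta_n$ stays below its threshold, call it $\delta^*$. So the nested compacts control the orbit only up to a finite time $N$.

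\emph{First repair.} You claim $\Re w_n$ keeps decreasing after time $N$ because $|\Im w_n|\ll|\Re w_n|$. This fails because the sector condition is not propagated: $v'\approx v(1+2|u|)$ while $|u'|\approx|u|(1+|u|)$. Equivalently, in $W=1/w$ the map is $W\mapsto W+1+O(1/W)$. It moves points to the right with $\Im W$ essentially fixed, so orbits leave every sector about the negative real axis.

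\emph{Second repair.} You claim a point with $\Re w\le-\delta^*/2$ and small $|\Im w|$ lies outside $\Omega$. This does not follow from the fact that orbits in $\Omega$ eventually enter $W_\delta$, since an orbit may start with $\Re w<0$. It is also false at the scale you control. The escaping set of $w-w^2$ near $0$ is only a horn $|\Im W|\lesssim c$, i.e.\ $|\Im w|\lesssim c|w|^2$. By contrast, $T_\delta$ contains points with $|\Im w|\sim\delta/100$, i.e.\ $|\Im W|\sim1/\delta$. Such points drift right with $\Im W$ changing by $O(1)$ and land in the attracting petal; the perturbation $azw^3$ does not change this near $0$. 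So $T_\delta\cap\Omega\neq\emptyset$ for small $\delta$, and the point picked out by your nested compacts may well lie in $\Omega$. As you note yourself, escape to $\infty$ is not automatic for this family, so controlling one orbit for all time is the wrong target.

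The missing idea is that no single non-convergent orbit is needed. Read $\Omega$ as the paper does: an open set on which $F^n\to(0,0)$ locally uniformly. This also settles your worry about the invariant line $\{w=0\}$. Then argue by contradiction. If $(z_0,0)\in\Omega$, some fiber disc $D_0=\{z_0\}\times\{|w|<r\}$ lies in $\Omega$ with uniform convergence on it.

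The paper pushes $D_0$ forward. Since $Q_z(0)=0$ and $Q_z'(0)=1$, $F^n(D_0)$ contains a disc about $w=0$ whose radius shrinks only slowly, hence eventually some $T_\delta$. Finitely many applications of Lemma \ref{L:X} then give points of $F^{n+k}(D_0)$ with $|w|\ge\delta^*$ at arbitrarily late times, contradicting uniform convergence.

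Your own nested construction fits this scheme just as well. Run from scale $\delta\to0$, it produces $w'_\delta\to0$ whose orbits reach $|w|\ge\delta^*$ at times $N(\delta)\gtrsim1/\delta\to\infty$. Nothing about the orbit after time $N(\delta)$ is required.
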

\begin{proof}
We know that all points in $(z,0)$ are in the closure of $\Omega.$

Suppose that there is a point $z_0\in U$ so that $(z_0,0)$ is in $\Omega.$ Necessarily $z_0\neq 0.$ Then there exists a disc $D_0=\{(z_0,w);\ |w|<r\}$ inside $\Omega$. We can assume that $F^n$ converges uniformly to $(0,0)$ on $D_0.$

The image of $D_0$, $D_1$, contains a smaller disc of radius $r_1$. Inductively, $D_n:=F^n(D_0)$ contains a disc of radius $r_n.$ Note that the radius $r_n$ will shrink very slowly while $z_n$ goes fast to the origin.

After many iterates, the boundary of the disc $D_n$ intersects some $T_{\delta}$ and then we use Lemma \ref{L:X} repeatedly to show that $F^{n+k}(D_0)$ contains points of radius $\delta_0$. This contradicts with the fact that the iterates converge uniformly to $(0,0).$
\end{proof}

The following two corollaries are now immediate.

\begin{cor}
The set $V\subset \Omega$ and $\Omega\cap \{(z,w); z=0\}=V$.
\end{cor}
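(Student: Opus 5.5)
The statement has two parts, and I will prove them in order: first that $V\subset\Omega$, then that $\Omega\cap\{z=0\}=V$.

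\emph{Step 1: $V\subset\Omega$.} On the invariant line $\{z=0\}$ the map is $F(0,w)=(0,Q_0(w))$ with $Q_0(w)=w-w^2$, because the term $azw^3$ vanishes there. Take $w\in V$. By definition of the parabolic basin, $Q_0^n(w)\to 0$, and for $n$ large $Q_0^n(w)$ lies in the attracting petal around the positive real axis. In particular, for some $n$ the point $Q_0^n(w)$ lies in $V_\delta$. Since $0\in U$, this gives $F^n(0,w)\in W_\delta$. By the corollary above, $W_\delta\subset\Omega$. Hence $(0,w)\in\Omega$, and so $V\subset\Omega$.

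\emph{Step 2: $\Omega\cap\{z=0\}\subset V$.} Suppose $(0,w)\in\Omega$. Then $F^n(0,w)=(0,Q_0^n(w))\to(0,0)$, which says $Q_0^n(w)\to 0$.

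\emph{Step 3: excluding the grand orbit of $0$.} A point whose $Q_0$-orbit tends to $0$ is either in the parabolic basin $V$, or eventually lands exactly on the fixed point $0$. I must rule out the second case. If $Q_0^n(w)=0$ for some $n$, then $F^n(0,w)=(0,0)$, which is a point of $U\times\{0\}$. By the preceding lemma, $U\subset\partial\Omega$, meaning $U\times\{0\}\subset\partial\Omega$ and is therefore disjoint from the open set $\Omega$. But $\Omega$ is forward invariant, so $F^n(0,w)\in\Omega$, a contradiction. Therefore $Q_0^n(w)\neq 0$ for all $n$, and $w\in V$.

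For the "either basin or eventually zero" dichotomy I use the standard Leau--Fatou flower theorem. The multiplicity of $Q_0$ at $0$ is $2$, so there is a single attracting petal. Any orbit converging to $0$ without hitting $0$ must eventually enter that petal, and hence belongs to $V$.

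Combining the steps gives $\Omega\cap\{z=0\}=V$. The only delicate point is Step 3, where the fixed point and its $Q_0$-preimages must be excluded. This is handled by the boundary lemma $U\subset\partial\Omega$, together with the invariance of $\Omega$.
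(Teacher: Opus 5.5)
Your argument is correct and is essentially the one the paper leaves implicit when it calls this corollary immediate: $V\subset\Omega$ because $Q_0$-orbits starting in $V$ land in $\{0\}\times V_\delta\subset W_\delta\subset\Omega$, and the reverse inclusion follows by excluding the $Q_0$-preimages of $0$, using $(0,0)\notin\Omega$ (from $U\subset\partial\Omega$) together with forward invariance of $\Omega$. One small imprecision is that lying in the attracting petal does not by itself put $Q_0^n(w)$ in the thin sector $V_\delta$; you need the tangential convergence $1/Q_0^n(w)=n+O(\log n)$, which is the same $1/w$ computation the paper uses in its lemma that orbits enter $W_\delta$.
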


\begin{cor}
There is exactly one immediate basin, $\Omega^0,$ and it contains $W_\delta.$
\end{cor}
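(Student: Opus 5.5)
The plan is to show that the component of $\Omega$ containing $W_\delta$ is forward invariant, and that every forward-invariant component must meet it, using Lemma \ref{L:delta}, the corollary $W_\delta\subset\Omega$, and the lemma that every orbit in $\Omega$ enters $W_\delta$.

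\textbf{Connectedness of $W_\delta$.} First I would check that $W_\delta=U\times V_\delta$ is connected. The sector $V_\delta$ is convex, so everything hinges on $U$ being connected. For $P(z)=z/2-z^2$ the multiplier $1/2$ lies in the main cardioid, so $P$ is conjugate to $z^2+c$ with $c$ in the main cardioid. Then $J_P$ is a quasicircle and the bounded Fatou set is a single completely invariant Jordan domain. Hence $U$ is connected (in fact $U$ is the immediate basin of $0$ for $P$), and so is $W_\delta$.

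This step matters for the second half of the statement. Since $\Omega\subset U\times\mathbb{C}$, the projection of any connected subset of $\Omega$ is a connected subset of $U$. If $U$ had several components, then $W_\delta$ could not lie in a single component of $\Omega$. So connectedness of $U$ is really needed, and I expect it to be the main point to justify, via the standard fact about the main cardioid.

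\textbf{Existence.} Let $\Omega^0$ be the connected component of $\Omega$ containing $W_\delta$; this makes sense because $W_\delta\subset\Omega$ by the previous corollary. The set $F(\Omega^0)$ is connected and lies in $\Omega$, since $\Omega$ is forward invariant. By Lemma \ref{L:delta}, it contains $F(W_\delta)\subset W_{\delta-\delta^2/2}\subset W_\delta\subset\Omega^0$. So $F(\Omega^0)$ is a connected subset of $\Omega$ meeting the component $\Omega^0$, which forces $F(\Omega^0)\subset\Omega^0$. Thus $\Omega^0$ is an immediate basin, and it contains $W_\delta$ by construction.

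\textbf{Uniqueness.} Let $\Omega'$ be any immediate basin, i.e. a component with $F(\Omega')\subset\Omega'$, and pick $p\in\Omega'$. By the lemma on forward orbits, there is $n$ with $F^n(p)\in W_\delta\subset\Omega^0$. Forward invariance gives $F^n(p)\in\Omega'$ as well. Distinct components are disjoint, so $\Omega'=\Omega^0$.
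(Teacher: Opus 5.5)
Your proof is correct and is the same argument the paper calls ``immediate'': the component containing $W_\delta$ is forward invariant by Lemma~\ref{L:delta} together with $W_\delta\subset\Omega$, and it is unique because every orbit in $\Omega$ enters $W_\delta$. Your check that $U$ is connected (here $P$ is conjugate to $z^2+3/16$, with $3/16$ in the main cardioid), and hence that $W_\delta$ is connected, supplies a step the paper leaves tacit but which is needed for $W_\delta$ to lie in a single component.
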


\begin{lem}\label{L:tube}
Fix $0<\delta<1/10|a|$, and let $\Omega^{0,\delta}$ be the connected component of $\Omega^0\cap \{|z|<\delta\}$ which contains $\{|z|<\delta, w\in V_{\delta}\}$. Then, $\Omega^{0,\delta} \subset \{(z,w); |z|<\delta, |w|<3\}$.
\end{lem}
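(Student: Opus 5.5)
The plan is to show that the real hypersurface $\Sigma:=\{(z,w):\ |z|<\delta,\ |w|=3\}$ does not meet $\Omega$. Granting this, the lemma is a connectedness argument. $\Omega^{0,\delta}$ is connected and contains the tube $\{|z|<\delta,\ w\in V_\delta\}$, on which $|w|<3$. If it also contained a point with $|w|>3$, then along a path joining the two points the continuous function $|w|$ would take the value $3$, giving a point of $\Omega$ on $\Sigma$. Hence $\Omega^{0,\delta}\subset\{|z|<\delta,\ |w|<3\}$.

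The first step is the basic estimate. Since $\delta<1/(10|a|)$, we have $|az|<1/10$ on $\{|z|<\delta\}$. Also $P$ maps $\{|z|<\delta\}$ into $\{|z|<\delta/2+\delta^2\}\subset\{|z|<\delta\}$. Writing $Q(z,w)=w(1-w+azw^2)$, for $|w|=3$ we get
$$|1-w+azw^2|\ \ge\ |w|-1-|az||w|^2\ \ge\ 3-1-0.9=1.1 .$$
More generally, on an annulus $3\le|w|\le \rho(z)$ with $|az|\rho(z)\le 1/5$, the factor satisfies $|1-w+azw^2|\ge |w|(1-1/5)-1\ge 1.4$. So $|w|$ grows geometrically as long as the orbit stays in the region
$$E:=\{|z|<\delta,\ 3\le|w|\le 1/(5|az|)\}.$$
Meanwhile the admissible radius $1/(5|az_n|)$ roughly doubles at each step, because $|z_{n+1}|\le |z_n|(1/2+\delta)$.

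The second step is to show that an orbit starting on $\Sigma$ never converges to $(0,0)$. Inside $E$, $|w_n|$ increases by a factor at least $1.4$, so the orbit cannot come back to small $|w|$ while it stays in $E$. The real issue, and the step I expect to be the main obstacle, is the possibility of a jump: $|w_{n+1}|\approx|w_n|^2$, which can overshoot the outer radius $1/(5|az_{n+1}|)$ and land where the cubic term $azw^3$ dominates. There cancellation in $1-w+azw^2$ could send the orbit back toward small $w$; this is precisely the source of unbounded Fatou components.

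To handle the jump, I would first try to replace $E$ by a forward-invariant escaping region adapted to the dominant term. For $|w|\ge 1/(5|az|)$ one has $|Q|\approx|az||w|^3$. Set $s=|z||w|$. In the quadratic regime $s$ grows like $|w|$. In the cubic regime it becomes $s'\approx|a|s^3/2$, so it keeps growing once $s$ exceeds a threshold comparable to $|a|^{-1/2}$. I would then check that the threshold region and $E$ overlap, so that every orbit leaving $E$ enters a region where $|z_n||w_n|$, and hence $|w_n|$, stays bounded below. Such an orbit cannot tend to $(0,0)$, which gives $\Sigma\cap\Omega=\emptyset$.

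If this piecewise argument is too delicate, the fallback is to use the forward invariance $F(\Omega^{0,\delta})\subset\Omega^{0,\delta}$. This holds because $F(W_\delta)\subset W_\delta$ by Lemma \ref{L:delta} and $P$ preserves $\{|z|<\delta\}$. With invariance, it suffices to prove that no point of $\Omega^{0,\delta}$ lies on $\Sigma$ using only iterates that stay inside the component.
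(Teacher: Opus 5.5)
There is a genuine gap: the statement your main plan aims at, $\Sigma\cap\Omega=\emptyset$, is false for every $a\neq 0$, so the ``jump'' step cannot be repaired. Fix $w$ with $|w|=3$ and put $\omega_N=Q_0^N(w)$, which grows doubly exponentially. For this fixed $w$, $z\mapsto w_N(z,w)$ is holomorphic. For $|az|\lesssim 2^N/|\omega_N|$ the cubic terms perturb $w_N$ only by a relative error tending to $0$ as $N\to\infty$, and $az_N=(az/2^N)(1+O(|z|))$. By Rouch\'e, the equation $1-w_N+az_Nw_N^2=0$ then has a root $z^\ast\approx 2^N/(a\,\omega_N)$, which lies in $\{|z|<\delta\}$ once $N$ is large. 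At this root $F^{N+1}(z^\ast,w)=(z_{N+1},0)$. Since $z\mapsto w_{N+1}(z,w)$ is a nonconstant holomorphic function vanishing at $z^\ast$, nearby $z'$ give $F^{N+1}(z',w)\in W_\delta\subset\Omega$ with $(z',w)\in\Sigma$. So orbits leaving $E$ can land exactly on the cancellation locus $az_nw_n\approx 1$. Your claim that they enter a region where $|z_n||w_n|$ stays bounded below is precisely what fails. Points of $\Sigma$ can belong to $\Omega$; they are just not in the component $\Omega^{0,\delta}$.

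What is missing is an argument that exploits connectivity inside $\Omega^{0,\delta}$. Your fallback does not supply one. Forward invariance of $\Omega^{0,\delta}$ is true, but a single point $p\in\Omega^{0,\delta}\cap\Sigma$ whose orbit goes out to $|w|>3$ and comes back to $(0,0)$ yields no contradiction. The paper's proof uses only your one-step estimate ($|z|<\delta$, $|w|=3$ implies $|w_1|\ge 3.3$) together with a path.

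\begin{enumerate}
\item If $\Omega^{0,\delta}$ reaches $|w|>3$, take a path $\gamma:[0,1]\to\Omega^{0,\delta}$ starting in the tube, and let $t_0$ be the first parameter with $|w(\gamma(t_0))|=3$.
\item Then $|w_1(t_0)|>3$, while $|w_1(0)|<\delta$ because the tube is forward invariant. By the intermediate value theorem there is $t_1<t_0$ with $|w_1(t_1)|=3$.
\item Since $P$ preserves $\{|z|<\delta\}$, the same step repeats. This produces $t_n$ with $|w_n(\gamma(t_n))|=3$ for every $n$.
\item That contradicts uniform convergence of $F^n\to(0,0)$ on the compact set $\gamma([0,1])\subset\Omega$.
\end{enumerate}

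This argument never needs the long-term behavior of an individual orbit starting on $\Sigma$. That is exactly the information your approach would require and which, as shown above, is not what you hoped.
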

\begin{proof}
Let $(z,w)$ be of the form $|z|<\delta$, $|w|=3$. Then,
$$|w_1|=|w-w^2+azw^3|\geq |w|^2-|w|-|a||z||w|^3\geq 9-3-\frac{1}{10}\times 27= 3.3>3.$$

Suppose next that $\Omega^{0,\delta}$ is not contained in the tube $\{(z,w); |z|<\delta, |w|<3\}$. Then, there is a curve $\gamma(t)=(z(t),w(t)), 0<t\leq 1$ contained in $\Omega^{0,\delta}$ which starts on the straight line in the $w$ axis from $0$ to $1/4$ and continues to a point $(z, w)\in \Omega^{0,\delta}$ with $|z|<\delta$ and $|w|>3.$ Let $t_0$ be the smallest number so that $|w(t_0)|=3.$ Let $F^n(\gamma(t))=(z_n(t),w_n(t))$. Then, there is a smallest number $t_1$ so that $|w_1(t_1)|=3$ and $t_1<t_0.$ Inductively, we get the decreasing sequence $t_n>0$ so that $|w_n(t_n)|=3$ and $t_n$ is the smallest such number. This contradicts the condition of the immediate basin, namely that the sequence $F^n([0,1])$ must converge uniformly to the origin.
\end{proof}

\begin{thm}\label{T:C4}
Let $F(z,w)=(\lambda z-z^2, w-w^2+azw^3)$, $0<\lambda<1$. Fix $\epsilon>0$ sufficiently small. Denote by $S$ the set of preimages of $(\epsilon,\epsilon)$ under $F$. For any connected component $\Omega_\alpha$ of $\Omega$, set $S_\alpha:=S\cap \Omega_\alpha$. Then, for an arbitrary constant $C>0$, there exists a point $p_\alpha\in \Omega_\alpha$ such that for any $q\in S_\alpha$, the Kobayashi distance $d_{\Omega_\alpha}(p_\alpha, q)\ge C$.
\end{thm}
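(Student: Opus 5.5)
We follow the structure of the proof of Theorem \ref{T:C2}, working first in the immediate basin $\Omega^0$ and then pulling back to the other components. The new difficulty is that $\Omega^0$ may be unbounded in the $w$-direction. Because of this, we cannot project $\Omega^0$ to a bounded punctured disc in the $w$-axis. Instead we localize, using Lemma \ref{L:tube}.

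\textbf{Step 1: far-out preimages.} Fix $\epsilon$ small so that $(\epsilon,\epsilon)\in W_\delta\subset\Omega^0$. Every $q=(z^k,w^k)\in F^{-k}(\epsilon,\epsilon)\cap\Omega^0$ satisfies $P^k(z^k)=\epsilon$. Since $U$ is the attracting basin of $P$ and $z^k$ lies in a preimage of $\epsilon$ of order $k$, we have $z^k\to\partial U$ as $k\to\infty$. Precisely, only finitely many $z^k$ lie in any compact subset of $U$, because $P^{-k}(\epsilon)$ accumulates only on $\partial U$. Hence there is $N$ with $d_U(0,z^k)\ge C+1$ for all $k\ge N$. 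The projection $(z,w)\mapsto z$ maps $\Omega^0$ into $U$ and is distance-decreasing. It follows that $d_{\Omega^0}((0,w_0),q)\ge C+1$ for every $w_0$ with $(0,w_0)\in\Omega^0$.

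\textbf{Step 2: the finitely many near preimages.} Only finitely many preimages occur with $k<N$; call them $q_1,\dots,q_M$. We must find $w_0\in V$ close to $0$ such that $d_{\Omega^0}((0,w_0),q_j)\ge C$ for every $j$. We plan to argue by contradiction. Suppose that for a sequence $w_0^{(m)}\to 0$ in $V_\delta$ the distances to some fixed $q_j$ stay below $C$. Then the points $(0,w_0^{(m)})$ lie in a fixed Kobayashi ball $B$ around $q_j$. This ball should be relatively compact in $\Omega^0$. The obvious route is to show that $\Omega^0$ is locally Kobayashi hyperbolic, complete near the boundary point $(0,0)$, where it suffices to work in a neighbourhood.

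To get this, use Lemma \ref{L:tube}: near the origin, the relevant piece $\Omega^{0,\delta}$ lies in $\{|z|<\delta,|w|<3\}$. Also $\{w=0,\ |z|<\delta\}\subset\partial\Omega$, by $U\subset\partial\Omega$ and $\Omega\cap\{z=0\}=V$. Therefore the map $(z,w)\mapsto w$ sends $\Omega^{0,\delta}$ into the punctured disc $\mathbb{D}^*_3$. In $\mathbb{D}^*_3$ the distance from $w_0\to0$ to any fixed compact set tends to infinity.

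To pass from $\Omega^{0,\delta}$ to $\Omega^0$, we use a localization principle for the Kobayashi metric. Any holomorphic disc in $\Omega^0$ through $(0,w_0)$ of bounded hyperbolic size either stays inside $\{|z|<\delta\}$ or has a definite Kobayashi length in the $z$-projection. The latter holds because $d_U(0,\partial\{|z|<\delta\})$ is bounded below and $P$ is not involved at $z=0$. Comparing infinitesimal metrics gives $d_{\Omega^0}((0,w_0),q_j)\to\infty$ as $w_0\to0$. The cleanest version: cover a chain from $(0,w_0)$ to $q_j$. The chain either leaves the set $\{|z|<\delta/2\}$, which costs at least $d_U(\{|z|<\delta/2\},\{|z|\ge\delta\})$ and is not enough by itself, or it stays in the tube, where the $w$-projection cost to $\mathbb{D}^*_3$ blows up. I expect this localization to be the main obstacle. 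I would first try to handle it by choosing $\delta$ so large relative to the $U$-distance that the $z$-escape cost exceeds $C$, which may require replacing $\{|z|<\delta\}$ by $\{d_U(0,z)<C+1\}$ and extending Lemma \ref{L:tube} to that set by iterating forward.

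\textbf{Step 3: other components.} For a component $\Omega_\alpha$ with $F^{n_\alpha}(\Omega_\alpha)\subset\Omega^0$, apply Steps 1–2 to the finite or far sets $F^{n_\alpha}(S_\alpha)\subset S$. This gives a point $(0,w_\alpha)$ in $F^{n_\alpha}(\Omega_\alpha)$, which contains points near $(0,0)$ in $V_\delta$ because $F^{n_\alpha}(\Omega_\alpha)$ is open and orbits enter $W_\delta$. Pick $p_\alpha$ with $F^{n_\alpha}(p_\alpha)=(0,w_\alpha)$. Since $F^{n_\alpha}$ is distance-decreasing, this yields $d_{\Omega_\alpha}(p_\alpha,q)\ge C$ for all $q\in S_\alpha$, exactly as in Theorem \ref{T:C2}.
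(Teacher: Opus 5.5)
Your Steps 1 and 3 follow the paper, which simply reruns the proof of Theorem \ref{T:C2}. Step 2, the only genuinely new point, is left open, and that is a real gap. You need a lower bound for $d_{\Omega^0}((0,w_0),q_j)$, and neither of your sketches gives one. The discs of a short chain in $\Omega^0$ need not stay in $\{|z|<\delta\}$ even when the chain's points do, so the projection $\pi_w\colon\Omega^{0,\delta}\to\mathbb{D}^*_3$ cannot be applied to them. And, as you note yourself, the $d_U$-cost of leaving $\{|z|<\delta/2\}$ is a fixed number, not $\ge C$. Your worry is well founded. The paper's proof just says to repeat the argument with $\Omega^{0,\delta}$ in place of $\Omega^0$. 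Since $\Omega^{0,\delta}\subset\Omega^0$, that yields lower bounds only for $d_{\Omega^{0,\delta}}\ge d_{\Omega^0}$, and only at points of $\Omega^{0,\delta}$. So a localization step is missing there too.

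Your closing idea can be made to work as follows.
\begin{enumerate}
\item \emph{Localization.} Royden's localization lemma gives $F_G(x;v)\le\coth\big(d_{\Omega^0}(x,\Omega^0\setminus G)\big)\,F_{\Omega^0}(x;v)$ for open $G\subset\Omega^0$. Also, $d_{\Omega^0}$ is the integrated form of the Kobayashi--Royden metric $F_{\Omega^0}$. Let $K'=\{z\in U: d_U(0,z)<C+1\}$ and $G=\Omega^0\cap(K'\times\mathbb{C})$, and let $G_0$ be the component of $G$ containing $\{|z|<\delta\}\times V_\delta$. Suppose $d_{\Omega^0}(p,q)<C$ with $p=(0,w_0)$. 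A path from $p$ to $q$ of $F_{\Omega^0}$-length $<C$ projects into $\{d_U(0,\cdot)<C\}$. Hence it stays in $G_0$, at $d_{\Omega^0}$-distance $\ge 1$ from $\Omega^0\setminus G$, and so $d_{G_0}(p,q)<\coth(1)\,C$.
\item \emph{A map into the punctured disc.} You need not extend the bound $|w|<3$ to $G_0$; the escape estimate of Lemma \ref{L:tube} needs $27|a||z|<3$. Instead pick $n_0$ with $P^{n_0}(\overline{K'})\subset\{|z|<\delta\}$. Then $F^{n_0}(G_0)$ is connected, lies in $\Omega^0\cap\{|z|<\delta\}$, and meets $\{|z|<\delta\}\times V_\delta$ by Lemma \ref{L:delta}. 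So it lies in $\Omega^{0,\delta}\subset\{|w|<3\}$. As $w\neq 0$ on $\Omega$, the map $h=\pi_w\circ F^{n_0}$ sends $G_0$ into $\mathbb{D}^*_3$.
\item \emph{Conclusion.} We have $h(p)=Q_0^{n_0}(w_0)\to 0$, while the finitely many $h(q_j)$ are fixed. Hence $d_{G_0}(p,q_j)\ge d_{\mathbb{D}^*_3}(h(p),h(q_j))\ge\coth(1)\,C$ for every $q_j\in G_0$ once $w_0$ is small. This contradicts the bound from the localization step, so $d_{\Omega^0}(p,q_j)\ge C$. It also makes the bounded/unbounded case split unnecessary.
\end{enumerate}

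Step 3 needs two repairs. First, take $n_\alpha$ minimal, so that $F^{n_\alpha}(S_\alpha)$ contains no forward images of $(\epsilon,\epsilon)$; those accumulate at the origin and cannot be avoided. Second, your reason for $(0,w_\alpha)\in F^{n_\alpha}(\Omega_\alpha)$ does not work. $F$ is not proper here: points with $z\to 0$ and $azw\to 1$ have bounded images. Moreover, orbits entering $W_\delta$ concerns higher iterates and produces no points on $\{z=0\}$. What is true is $F^{n_\alpha}(\Omega_\alpha)\supset\Omega^0\setminus\{z=0\}$, because $F^{n_\alpha}$ is proper over $\{z\neq 0\}$ and $\Omega^0\setminus\{z=0\}$ is connected. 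So use $p=(z_p,w_0)$ with small $z_p\neq 0$; the argument above goes through with trivial changes.
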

\begin{proof}
First, suppose that $\Omega^0$ is bounded. Then, for $\Omega^0$, the same argument as in the proof of Theorem \ref{T:C2} works, since $\Omega^0$ is bounded and $\{w=0\}\subset \partial \Omega^0$.

Next, suppose that $\Omega^0$ is unbounded. By Lemma \ref{L:tube}, $\Omega^{0,\delta}$ is bounded and $\{w=0\}\subset \partial \Omega^{0,\delta}$. Then, we can repeat the argument above, replacing $\Omega^0$ by $\Omega^{0,\delta}$.
\end{proof}

\end{document}